\documentclass[a4paper,12pt]{amsart}
\usepackage[a4paper,marginratio={1:1},scale={0.72,0.74},footskip=7mm,headsep=10mm]{geometry}

\usepackage{amsfonts}
\usepackage{amssymb,amsmath,latexsym}
\usepackage[dvips]{graphics}
\usepackage{graphicx,subfigure}
\usepackage[dvips]{color}
\usepackage[T1]{fontenc}
\usepackage[active]{srcltx}
\usepackage{amsmath}
\usepackage{amsfonts}
\usepackage{amssymb}
\usepackage{psfrag}
\usepackage{color}
\usepackage{url}
\usepackage{amsthm}
\usepackage{array}
\usepackage{pst-tree}
\usepackage{lscape}
\usepackage{dsfont}
\usepackage[T1]{fontenc}
\usepackage{pstricks,pstricks-add}\usepackage{mathrsfs}  

\setlength{\topmargin}{-10mm} \setlength{\oddsidemargin}{0pt}
\setlength{\evensidemargin}{0pt} \setlength{\textwidth}{16cm}
\setlength{\textheight}{24cm}

\newtheorem{theorem}{Theorem}[section]
\newtheorem{proposition}{Proposition}[section]
\newtheorem{remark}{Remark}[section]

\newtheorem{corollary}{Corollary}[section]

\newtheorem{lemma}{Lemma}[section]

\def\e{\mathbb{E}}
\def\p{\mathbb{P}}
\def\R{\mathbb{R}}
\newcommand{\ind}{\mbox{\rm 1\hspace{-0.04in}I}}


\title[Density behaviour related to L\'evy processes]{Density behaviour related to L\'evy processes}

\author{Lo\"i{}c Chaumont and Jacek Ma{\l}ecki}
\address{Lo\"i{}c Chaumont, \\ LAREMA, D\'e{}partement de Math\'e{}matique\\ Universit\'e d'Angers \\ Bd Lavoisier - 49045, 
Angers Cedex 01, France}
\email{loic.chaumont@univ-angers.fr}
\address{Jacek Ma{\l}ecki\\ Wydzia{\l} Matematyki \\ Politechnika Wroc{\l}awska \\ ul. Wybrze{\.z}e Wyspia{\'n}\-skiego 27\\ 
50-370 Wroc{\l}aw, Poland}
\email{jacek.malecki@pwr.edu.pl}

\keywords{L\'evy process, supremum process, reflected process, It\^o measure, entrance law}

\subjclass[2010]{60G51, 46N30}

\date{\today}

\thanks{J. Ma{\l}ecki is supported by the Polish National Science Centre (NCN) grant no. 2015/19/B/ST1/01457.}

\begin{document}

\begin{abstract} Let $p_t(x)$, $f_t(x)$ and $q_t^*(x)$ be the densities at time $t$ of a real L\'evy process,
its running supremum and the entrance law of the reflected excursions at the infimum. We provide relationships 
between the asymptotic behaviour of $p_t(x)$, $f_t(x)$ and $q_t^*(x)$, when $t$ is small and $x$ is large.
Then for large $x$, these asymptotic behaviours are compared to this of the density of the L\'evy measure.
We show in particular that, under mild conditions, if $p_t(x)$ is comparable to $t\nu(x)$, as $t\rightarrow0$ and
$x\rightarrow\infty$, then so is $f_t(x)$.
\end{abstract}

\maketitle

\section{Introduction}

Real L\'evy processes are very often involved in stochastic modeling of time dependent dynamics. 
For such a process $(X,\p)$, it is often crucial to have information on the law of $X_t$ and this of its past supremum 
at time $t>0$, $\overline{X}_t=\sup_{s\le t}X_s$, in terms of the characteristics of $(X,\p)$. However, most of the time there 
is no explicit form for these distributions. The only existing results regard the asymptotic behavior of the distribution 
functions $\p(X_t\le x)$ and $\p(\overline{X}_t\le x)$, as $t\rightarrow\infty$ and $x\rightarrow0$ or $\p(X_t>x)$ and 
$\p(\overline{X}_t>x)$, as $t\rightarrow0$ and $x\rightarrow\infty$, and very few works go beyond the stable case, 
see \cite{bib:kmr13}. 

In the present paper, assuming absolute continuity of the process $(X,\p)$ and its L\'evy measure on $[0,\infty)$, we 
obtain relationships between the asymptotic behaviour of the density of $X_t$ and this of $\overline{X}_t$, for small
times and large space values. More specifically, let $p_t(x)$, $f_t(x)$ and $\nu(x)$ be the densities on $[0,\infty)$ of $X_t$, 
$\overline{X}_t$ and the L\'evy measure, respectively. Then under mild conditions essentially bearing on the smoothness 
of $\nu(x)$, we show in a series of results that if the asymptotic behaviour of $p_t(x)$, as $t\rightarrow0$ and 
$x\rightarrow\infty$ is given by $t\nu(x)$, then this is also the case for $f_t(x)$. The most famous example where this 
situation holds is the  stable case  \cite{bib:ds10}, \cite{ku1}, \cite{ku2} but other examples can be 
constructed in the spectrally positive case or for subordinated Brownian motions as it is done here. Actually, in all the 
particular situations where the asymptotic behaviour of $p_t(x)$ can be described, it is as expected and the behaviour 
of $f_t(x)$ can be derived. 

Another important density in our work is this of the entrance law of the excursions of $X$ reflected at its past infimum. 
Its close connection with $p_t(x)$ and $f_t(x)$ allows us to use results from fluctuation theory and is the main argument of 
our proofs. Whenever the half-line $(0,\infty)$ is regular, this density, denoted here by $q_t^*(x)$, corresponds to the density 
of the renewal measure of the space-time upward ladder process. This relationship reinforces the interest of the asymptotic 
behaviour of $q_t^*(x)$ as $t\rightarrow0$ and $x\rightarrow\infty$, as described here. 

The next section is devoted to reminders on the essentials of fluctuation theory and some preliminary results regarding 
the renewal measure of the ladder process. Then we state our main results in Section 3, first with asymptotic results for 
$t\to0^+$, then we give estimates in small $t$ and large $x$ regime and finally we state asymptotic results for $x\to \infty$. 
In Section \ref{sec:examples} we give some examples in order to illustrate the possible applications of our results.
Proofs will be given in Section \ref{proofs}.

\section{Notation and preliminary results}

We denote by $\mathcal{D}$  the space of c\`{a}dl\`{a}g paths $\omega:[0,\infty )
\rightarrow\mathbb{R}\cup\{\infty\}$ with lifetime $\zeta(\omega )=\inf \{t\ge0:\omega _{t}=\infty\}$, with the usual convention that 
$\inf\emptyset=+\infty$. The space $\mathcal{D}$ is equipped with the Skorokhod topology,  its Borel $\sigma $-field $\mathcal{F}$
and the usual completed filtration $(\mathcal{F}_{s},s\geq 0)$ generated by the coordinate process $X=(X_{t},t\geq 0)$ on the space 
$\mathcal{D}$. We write  $\underline{X}$ and $\overline{X}$ for the infimum and supremum processes, that is
\[\underline{X}_{t}=\inf \{X_{s}:0\leq s\leq t\}\;\;\;\mbox{and}\;\;\;
\overline{X}_{t}=\sup \{X_{s}:0\leq s\leq t\}\,.\]
We also define the first passage time by $X$ in the open half line  $(-\infty,0)$ by:
\[\tau_0^-=\inf\{t>0:X_t<0\}\,.\]
We denote by $\mathbb{P}_x$ the law on $(\mathcal{D},\mathcal{F})$ of a L\'{e}vy process starting from $x\in\mathbb{R}$ and
we will set $\mathbb{P}:=\mathbb{P}_0$. We will only mention $X$ which will be understood as a L\'evy process under the family of 
probability measures $(\p_x)_{x\in\mathbb{R}}$. 
\medskip

\noindent {\it In all this paper, we exclude the case where $|X|$ is a subordinator and this of compound 
Poisson processes.}

\medskip

Recall that both reflected processes $X-\underline{X}$ and $\overline{X}-X$ are Markovian. Moreover the state 0 is regular for 
itself, for $X-\underline{X}$ if and only if 0 is regular for $(-\infty,0)$, for the process $X$. When it is the case, we will simply write 
that $(-\infty,0)$ is regular. Similarly, we will write that $(0,\infty)$ is regular when 0 is regular for $(0,\infty)$, for the process $X$,
which is equivalent to the fact that 0 is regular for itself, for the process $\overline{X}-X$. Since $X$ is not a compound 
Poisson process, at least one of the half lines $(-\infty,0)$ or $(0,\infty)$ is regular.\\

If $(-\infty,0)$ is regular, then the local time at 0 of the process $X-\underline{X}$ is a continuous, increasing, additive 
functional which we will denote by $L^*$, satisfying $L_0^*=0$, a.s., and such that the support of the measure $dL_t^*$ is the set
$\overline{\{t:X_t=\underline{X}_t\}}$. Moreover $L^*$ is unique up to a multiplicative constant. We will normalize it by
\begin{equation}\label{norm1}
\e\left(\int_0^\infty e^{-t}\,dL_t^*\right)=1\,.
\end{equation}

\medskip

When  $(-\infty,0)$ is not regular, the set $\{t:(X-\underline{X})_t=0\}$ is discrete and following
\cite{bib:b96} and \cite{ky}, we define the local time $L^*$ of $X-\underline{X}$ at 0 by
\begin{equation}\label{norm2}
L_t^*=\sum_{k=0}^{R_t}{\rm\bf e}^{(k)}\,,
\end{equation}
where for $t>0$, $R_t=\mbox{Card}\{s\in(0,t]:X_s=\underline{X}_s\}$, $R_0=0$ and ${\rm\bf e}^{(k)}$, $k=0,1,\dots$ is a 
sequence of independent and exponentially distributed random variables with parameter
\begin{equation}\label{alpha}
\gamma=\left(1-\e(e^{-\tau^-_0})\right)^{-1}\,.
\end{equation}

The It\^o measure $n^*$ of the excursions away from 0 of the process $X-\underline{X}$ is characterized by the
{\it compensation formula}. More specifically, it is defined for any positive and predictable process $F$ by,
\begin{equation}\label{compensation}\e\left(\sum_{s\in G}F(s,\omega,\epsilon^s)\right)=
\e\left(\int_0^\infty dL_s^*\left(\int_E F(s,\omega,\epsilon)n^*(d\epsilon)\right)\right)\,,
\end{equation}
where $E$ is the set of excursions, $G$ is the set of left end points of the excursions, and $\epsilon^s$ is the excursion  
which starts at $s\in G$.
We refer to \cite{bib:b96}, Chap.~IV, \cite {ky}, Chap.~6 and \cite{do} for more detailed definitions and some
constructions of $L^*$ and $n^*$. The measure $n^*$ is $\sigma$-finite and it is finite if and only if $(-\infty,0)$ is not regular.\\

When $(-\infty,0)$ is not regular, the measure $n^*$ of the excursions away from 0  is proportional to
the distribution of the process $X$ under the law $\mathbb{P}$, killed at its first passage
time in the negative half line. More formally, the excursion that starts at $0\in G$ is given by  
$\epsilon^{0}=(X_t\ind_{\{t<\tau_0^-\}}+\infty\cdot\ind_{\{t\ge\tau_0^-\}})$
and for any bounded Borel functional $K$ on $E$,
\begin{equation}\label{excdisc}
\int_EK(\epsilon)n^*(d\epsilon)=\gamma\,\e[K(\epsilon^{0})]\,.
\end{equation}
In this case, the normalization (\ref{norm1}) and the compensation formula (\ref{compensation}) are direct consequences 
of definitions (\ref{norm2}), (\ref{excdisc}) and the strong Markov property.

\medskip

In any case, $n^*$ is a Markovian measure whose semigroup is this of the killed L\'evy process when it enters in the negative 
half line. More specifically, for $x>0$, let us denote by $\mathbb{Q}_{x}^*$ the law of the process
$(X_t\ind_{\{t<\tau_0^-\}}+\infty\cdot\ind_{\{t\ge\tau_0^-\}},\,t\ge0)$ under $\p_x$,
that is for $\Lambda \in \mathcal{F}_{t}$,
\begin{equation}
\label{4524}
\mathbb{Q}_{x}^*(\Lambda ,t<\zeta )=\mathbb{P}_{x}(\Lambda ,\,t<\tau_{0}^-)\,.
\end{equation}
Then for all Borel positive functions $f$ and $g$ and for all $s,t>0$,
\begin{equation}
\label{4527}
n^*(f(X_t)g(X_{s+t}),s+t<\zeta)=n^*(f(X_t)\e^{\mathbb{Q}^*}_{X_t}(g(X_s)),s<\zeta)\,,
\end{equation}
where $\e_x^{\mathbb{Q}^*}$ means the expectation under $\mathbb{Q}_x^*$. Recall that $\mathbb{Q}_{0}^*$ is well defined 
when $(-\infty,0)$ is not regular and in this case, from (\ref{excdisc}), we have $\mathbb{Q}_{0}^*=\gamma^{-1}n^*$.\\  

We write $X^*:=-X$ for the dual L\'evy process whose law under $\p_x$ will be denoted by $\p_x^*$, that is 
$(X^*,\p_x)=(X,\p_x^*)$ and we set $\p^*=\p^*_0$. Then we define the probability measures $\mathbb{Q}_x$ in the same way 
as in (\ref{4524}) with respect to the dual process $(X,\p^*)$. Let us denote by $q_{t}^*(x,dy)$  (resp. $q_{t}(x,dy)$)  the 
semigroup of the strong Markov process $(X,\mathbb{Q}_x^*)$ (resp. $(X,\mathbb{Q}_x)$) and by $q_t^*(dx)$, 
$t>0$, the entrance law of $n^*$, that is for any positive Borel function $f$,
\begin{equation}\label{2682}
\int_{[0,\infty)}f(x)\,q_t^*(dx)=n^*\left(f(X_t),\,t<\zeta\right)\,.
\end{equation}
The local time at 0 of the reflected process at its supremum $\overline{X}-X=X^*-\underline{X}^*$ and the measure of its 
excursions away from 0 are defined in the same way as for $X-\underline{X}$. They are respectively denoted by $L$ and 
$n$. Then the entrance law $q_t(dx)$ of $n$ is defined in the same way as $q_t^*(dx)$. 

\medskip

Let us now recall the definition of the ladder processes. The ladder time processes $\tau$ and $\tau^*$, and the ladder height 
processes $H$ and $H^*$ are the following (possibly killed) subordinators:
\[\tau_t=\inf\{s:L_s>t\}\,,\;\;\tau^*_t=\inf\{s:L_s^*>t\}\,,\;\;H_t=X_{\tau_t}\,,\;\;H^*_t=-X_{\tau_t^*}\,,\;\;t\ge0\,,\]
where $\tau_t=H_t=+\infty$, for $t\ge\zeta(\tau)=\zeta(H)$ and $\tau_t^*=H_t^*=+\infty$, for $t\ge\zeta(\tau^*)=\zeta(H^*)$. 
Recall that the bivariate processes $(\tau,H)$ and $(\tau^*,H^*)$ are themselves L\'evy processes. We denote by $\kappa$ 
and $\kappa^*$ their respective Laplace exponents, that is for all $u,\alpha,\beta\ge0$,
\begin{equation}\label{7342}
\e(e^{-\alpha\tau_u-\beta H_u})=e^{-u\kappa(\alpha,\beta)}\;\;\;\mbox{and}\;\;\;\e(e^{-\alpha\tau^*_u-\beta H^*_u})=
e^{-u\kappa^*(\alpha,\beta)}\,.
\end{equation}
The drifts  ${\tt d}$ and ${\tt d}^*$  of the subordinators $\tau$ and $\tau^*$ satisfy
\begin{equation}\label{delta}
\int_0^t\ind_{\{X_s=\overline{X}_s\}}\,ds={\tt d}L_t\,,\;\;\;\int_0^t\ind_{\{X_s=\underline{X}_s\}}\,ds={\tt d}^*L_t^*.
\end{equation}
Recall that ${\tt d}>0$ if and only if $(-\infty,0)$ is not regular.  In this case, we can check that ${\tt d}=\gamma^{-1}$, see \cite{bib:ch13}. 
Similarly, ${\tt d}^*>0$ if and only if 0 is not regular for $(0,\infty)$, so that ${\tt d}{\tt d}^*=0$ always holds since 0 is necessarily 
regular for at least one of the half lines. 

\medskip

Let us write 
\[g(dt,dx)=\int_0^\infty\p(\tau_u\in dt,\,H_u\in dx)\,du\]
for the renewal measure of the ladder process $(\tau,H)$. 
Then $g(dt,dx)$ satisfies two useful convolution equations that are given in the following lemma. Note that 
(\ref{2681}) is the continuous time counterpart of (10) in \cite{ad}. It was stated and proved in the preprint version of \cite{ac}. 
Moreover, (\ref{2294}) is stated as (4.9) in \cite{ac} under the assumption that 0 is regular for both $(-\infty,0)$ and $(0,\infty)$ 
but no proof is given. A proof of Theorem \ref{7551} is given in Section \ref{subsec:Aux}.

\begin{theorem}\label{7551}
\label{thm:qtpt:equalities}
Let $X$ be any real L\'evy process which is not a compound Poisson process
and such that $|X|$ is not a subordinator. Then the renewal measure $g(dt,dx)$ satisfies the following identities between 
measures on $[0,\infty)^2$,
\begin{equation}\label{2681}
tg(dt,dx)=\int_{s=0}^t\int_{z=0}^x\p_z(X_{t-s}\in dx)g(ds,dz)\,dt
\end{equation}
and
\begin{equation}\label{2294}
xg(dt,dx)=\int_{s=0}^t\int_{z=0}^x\frac{x-z}{t-s}\p_z(X_{t-s}\in dx)g(ds,dz)\,dt\/.
\end{equation}
\end{theorem}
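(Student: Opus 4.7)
My plan is to reduce both identities to differentiations of Fristedt's formula for $\kappa(\alpha,\beta)$ via the bivariate Laplace transform in $(t,x)$. By (\ref{7342}),
\[
\int\int_{[0,\infty)^2} e^{-\alpha t-\beta x}g(dt,dx)=\int_0^\infty e^{-u\kappa(\alpha,\beta)}du=\frac{1}{\kappa(\alpha,\beta)},\qquad \alpha,\beta>0,
\]
so the Laplace transform of the left-hand side of (\ref{2681}) equals $-\partial_\alpha(1/\kappa(\alpha,\beta))=\partial_\alpha\kappa(\alpha,\beta)/\kappa(\alpha,\beta)^2$. On the right-hand side I would swap the $s$- and $t$-integrations by Fubini and set $u=t-s$, $y=x-z$; since $\p_z(X_u\in dx)=\p(X_u\in dy)$ with $y=x-z$, and the constraint $z\leq x$ becomes $y\geq0$, the Laplace transform factorises as
\[
\frac{1}{\kappa(\alpha,\beta)}A(\alpha,\beta),\qquad A(\alpha,\beta):=\int_0^\infty e^{-\alpha u}\,\e\bigl(e^{-\beta X_u}\ind_{\{X_u\geq0\}}\bigr)\,du.
\]
Thus (\ref{2681}) amounts to $\partial_\alpha\log\kappa(\alpha,\beta)=A(\alpha,\beta)$, which is exactly Fristedt's formula
\[
\log\kappa(\alpha,\beta)=\log k+\int_0^\infty\!\!\int_{(0,\infty)}\frac{e^{-t}-e^{-\alpha t-\beta x}}{t}\,\p(X_t\in dx)\,dt
\]
(see \cite{bib:b96}, Chap.~VI) differentiated once in $\alpha$; the additive constant $\log k$, which is fixed by the normalisation (\ref{norm1}) of $L^*$, drops out of the derivative.

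For (\ref{2294}) the argument is structurally identical. The Laplace transform of the left-hand side is $-\partial_\beta(1/\kappa)=\partial_\beta\kappa/\kappa^2$, while on the right-hand side the extra factor $(x-z)/(t-s)$ becomes $y/u$ after the same change of variables, yielding
\[
\frac{1}{\kappa(\alpha,\beta)}B(\alpha,\beta),\qquad B(\alpha,\beta):=\int_0^\infty\frac{e^{-\alpha u}}{u}\,\e\bigl(X_u e^{-\beta X_u}\ind_{\{X_u\geq0\}}\bigr)\,du,
\]
and differentiating Fristedt's formula in $\beta$ gives $\partial_\beta\log\kappa(\alpha,\beta)=B(\alpha,\beta)$, which is precisely the required identity. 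Uniqueness of the Laplace transform for measures on $[0,\infty)^2$ then yields both equalities simultaneously.

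The principal difficulty is bookkeeping rather than analysis: one must justify Fubini, the change of variables, and differentiation under the integral sign in Fristedt's formula, all of which hold for $\alpha,\beta$ large enough, and check that Fristedt's formula is available under the normalisation (\ref{norm1}) adopted in the paper --- including in the non-regular case where $L^*$ is defined by (\ref{norm2})--(\ref{alpha}). A more pathwise proof via the compensation formula (\ref{compensation}) is conceivable, and is probably closer to the discrete Alili--Doney approach referenced after (\ref{2681}), but it would essentially repackage the same Fristedt content, so the transform route looks the cleanest.
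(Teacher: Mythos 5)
Your proof is correct and rests on the same foundation as the paper's --- bivariate Laplace transforms in $(t,x)$ and Fristedt's formula --- but your execution is more direct. The paper's argument starts from (\ref{7342}), differentiates in $\beta$ (resp.~$\alpha$) to obtain $\e(H_u e^{-\alpha\tau_u-\beta H_u}) = u\,\e(e^{-\alpha\tau_u-\beta H_u})\,\partial_\beta\kappa(\alpha,\beta)$, substitutes the corresponding derivative of Fristedt's formula, introduces an auxiliary copy $\tilde X=(X_{\tau_u+t}-X_{\tau_u},\,t\ge0)$ independent of $(\tau_u,H_u)$, changes variables, integrates over $u$ by parts, and finally applies the strong Markov property at $\tau_u$ to recognise the Laplace transform of the right-hand side. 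You short-circuit all of this by computing the transform of the right-hand side directly: by Fubini, the substitution $u=t-s$, $y=x-z$, and spatial homogeneity $\p_z(X_{t-s}\in dx)=\p(X_{t-s}\in d(x-z))$, the transform factorises as $\kappa(\alpha,\beta)^{-1}A(\alpha,\beta)$ (resp.~$\kappa(\alpha,\beta)^{-1}B(\alpha,\beta)$), and matching with $-\partial_\alpha(1/\kappa)$ (resp.~$-\partial_\beta(1/\kappa)$) reduces the identities to $\partial_\alpha\log\kappa=A$ and $\partial_\beta\log\kappa=B$, which is precisely differentiated Fristedt. This buys a shorter proof that dispenses with the auxiliary copy, the integration by parts, and the strong Markov property, at no cost: the Fubini and differentiation-under-the-integral justifications you flag are exactly the ones the paper implicitly requires, and your remark that the normalisation $k=1$ drops out of the logarithmic derivative is the right thing to check. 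The one step worth spelling out in a final write-up is the spatial-homogeneity substitution, since it is what makes the double transform of the right-hand side decouple into the product $\kappa^{-1}\cdot A$.
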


\noindent The entrance law $q_t^*(dx)$ is closely related to the renewal measure $g(dt,dx)$ through the
following identity    
\begin{equation}\label{2598}
{\rm d}^*\delta_{\{(0,0)\}}(dt,dx)+q_t^*(dx)\,dt=g(dt,dx),
\end{equation}
which can be found in Lemma 1 of \cite{bib:ch13}. Since $X$ is not a 
compound Poisson process, the only possible atom for the law of $\tau_u$ and $H_u$ is 0 and this is the case if and only if 
$(0,\infty)$ is not regular. Moreover, $\tau_u=0$ if and only if $H_u=0$. It follows from these observations that $g(\{0\},\{0\})>0$ 
if and only if $(0,\infty)$ is not regular and more precisely $g(\{0\},\{0\})={\tt d}^*$. Moreover in general, $g(\{a\},A)=g(A,\{a\})=0$ 
whenever $a>0$ or $A\subset(0,\infty)$. We derive from these remarks and (\ref{2598}) that the measure $q_t^*(dx)$ on $[0,\infty)$
is atomless for all $t>0$. Moreover, plugging relation(\ref{2598}) into (\ref{2681}) and (\ref{2294}), we obtain the following identity 
between measures on $(0,\infty)^2$,
\begin{equation}
tq_t^*(dx)\,dt={\rm d}^*\p(X_t\in dx)\,dt+\int_{s=0}^t\int_{z=0}^x\p_z(X_{t-s}\in dx)q_s^*(dz)ds\,dt\label{1732}
\end{equation}
and
\begin{equation}
xq_t^*(dx)\,dt={\rm d}^*\frac{x}{t}\p(X_t\in dx)\,dt+\int_{s=0}^t\int_{z=0}^x\frac{x-z}{t-s}\p_z(X_{t-s}\in dx)q_s^*(dz)ds\,dt.
\label{5144}
\end{equation} 
\medskip

Let us now focus on the links between these different objects and the law of the supremum.
From  Corollary 3 of \cite{bib:ch13}, the law of $\overline{X}_t$ fulfills the following representation
\begin{equation}
   \label{eq:sup:formula}
  \p(\overline{X}_t\in dx) = \int_0^t n(t-u<\zeta)q_u^*(dx)du+{\tt d}q_t^*(dx)+{\tt d^*}n(t<\zeta)\delta_{\{0\}}(dx)\/.
\end{equation}
In all this paper, by absolute continuity on some set, we mean absolute continuity with respect to the  
Lebesgue measure on this set.
From Lemma 1 of \cite{bib:ch13}, the distribution $\p(X_t\in dx)$ is absolutely continuous on $[0,\infty)$ if and only if 
$q_t^*(dx)$ is absolutely continuous on $[0,\infty)$ and when this is the case, the law of $\overline{X}_t$ is absolutely continuous 
on $(0,\infty)$. We will denote by $f_t(x)$ the corresponding density function. Moreover, this law has an atom at $0$ if and only
if $(0,\infty)$ is not regular. The following representation derived from (\ref{eq:sup:formula}) will frequently be used in the sequel.
\begin{equation}
  \label{eq:ft:formula}
  f_t(x) = \int_0^t n(t-u<\zeta)q_u^*(x)du+{\tt d}q_t^*(x)\/,\quad x>0\/.
\end{equation}
Moreover, by Corollary 4 of \cite{bib:ch13}, the following integral representation of the density of $X_t$ under $\p$ holds
\begin{eqnarray}
  \label{eq:pt:formula:exp}
	  p_t(x) &=& \int_0^t \int_0^\infty q_u^*(x+z)q_{t-u}(dz)du+{\tt d}q_t^*(x)\/,\quad x>0\/.
\end{eqnarray}
Finally let us write expression (\ref{5144}) in the absolute continuous case
\begin{equation}
\label{eq:ACformula}
xq_t^*(x) = {\tt d^*}\dfrac{xp_t(x)}{t}+\int_0^t du\int_0^xdz\,q_u^*(z)\frac{x-z}{t-u}p_{t-u}(x-z)\/,\quad x,t>0\/.
\end{equation}
We emphasize that from (\ref{2598}) when the distribution $\p(X_t\in dx)$ is absolutely continuous and $(0,\infty)$ is regular, 
i.e.~$d^*=0$, the renewal measure $g(dt,dx)$ is absolutely continuous on $[0,\infty)^2$ and the function $(t,x)\mapsto q_t^*(x)$
corresponds to its density. 

\section{Main results}

We recall once again that in all the results of this paper the process $|X|$ is not a subordinator and that $X$ is not a compound 
Poisson process. 

We shall denote by $\nu(dx)$ the L\'evy measure of $X$. Apart from Proposition \ref{prop:qt:tail}, most of our 
results relate to densities of the Lévy measure, the process at fixed times, the entrance law of the reflected  excursions 
and the past supremum, hence the minimal assumptions for the statements of our results to make sense are as follows:

\medskip

\[(H)\;\;\;\;\;\;
\begin{minipage}{5.5in}
{\it  For all $t>0$, the law of $X_t$ under $\p$ is absolutely continuous on $[0,\infty)$ and the L\'evy measure $\nu(dx)$ is 
absolutely continuous on $(0,\infty)$.}
\end{minipage}\]

\medskip

\noindent Therefore, unless explicitly mentioned in Proposition \ref{prop:qt:tail}, assumption $(H)$ will be in force all along this 
paper. Then recall from the previous section that under assumption $(H)$, $q_t^*(dx)$ is absolutely continuous on $[0,\infty)$ 
and $\p(\overline{X}_t\in dx)$ is absolutely continuous on $(0,\infty)$. The corresponding densities will be  denoted as follows:
\[\p(X_t\in dx)=p_t(x)\,dx\;\;;\;\;\nu(dx)=\nu(x)\,dx\;\;;\;\;q_t^*(dx)=q_t^*(x)\,dx\;\;;\;\;\p(\overline{X}_t\in dx)=f_t(x)\,dx.\]

\subsection{Asymptotic results for $t\to0^+$} Let us start by looking at the small time asymptotic behaviour of $p_t(x)$, $q^*_t(x)$ and
$f_t(x)$. Recall that the general result states that for any L\'evy process the measure $\p(X_t\in dx)/t$ tends weakly as $t\to 0^+$ to the 
L\'evy measure $\nu(dx)$ on $|x|>\varepsilon$ with fixed $\varepsilon>0$, see Exercise 1, Chap.~I in \cite{bib:b96}. Obviously, it does 
not imply in general that $p_t(x)/t$ tends to $\nu(x)$ for large $x$, but one may expect such a behaviour of $p_t(x)$ to hold and, in fact, 
it is true in many cases. Some of them are presented in Section \ref{sec:examples}.
In the first result we show the relation between the asymptotic behaviour of $p_t(x)$ and this of $q_t^*(x)$ as 
$t\to 0^+$, for processes whose density of the L\'evy measure is uniformly continuous (for large $x$). Note that $\nu(x)$ is uniformly 
continuous for example whenever it is continuous (for large $x$) and tends to $0$ at infinity. Let us introduce the following notation
\begin{equation}
  \label{eq:NN*:defn}
	N^*(t) = \int_0^t n^*(s<\zeta)ds\/.
\end{equation}


\begin{theorem}
\label{thm:asympt:t}
Assume that $\nu(x)$ is uniformly continuous on $(x_1,\infty)$ for some $x_1>0$. Then the following assertions are equivalent.
\begin{itemize}
\item[(a)] There exists $x_0>0$ such that
\begin{equation}
  \label{eq:ptnu:asympt:tweak}
	\lim_{t\to 0^+}\frac{p_t(x)}{t}=\nu(x)
\end{equation}
uniformly for $x\geq x_0$.
\item[(b)] There exists $x_0>0$ such that 
\begin{equation}
  \label{eq:qtnu:asympt:tweak}
	\lim_{t\to 0^+}\frac{q_t^*(x)}{{\tt d^*}+N^*(t)}=\nu(x)
\end{equation}	
	uniformly for $x\geq x_0$.
	\end{itemize}
\end{theorem}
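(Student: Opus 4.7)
My plan is to use the two complementary convolution identities (\ref{eq:ACformula}) and (\ref{eq:pt:formula:exp}), together with the auxiliary mass identity
\[
t = \int_0^t \mu_u\, n(t-u<\zeta)\,du + {\tt d}\,\mu_t, \qquad \mu_t := {\tt d}^* + N^*(t),
\]
which I derive by integrating the total-mass version of (\ref{eq:sup:formula}) in $x$ and then in $t$, followed by an integration by parts (using $N^*(0)=N(0)=0$) yielding $\int_0^t N^*(u)n(t-u<\zeta)\,du = \int_0^t N(t-u)n^*(u<\zeta)\,du$, together with the standing fact ${\tt d}\,{\tt d}^*=0$.

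For the direction (a)$\Rightarrow$(b), I divide (\ref{eq:ACformula}) by $x\mu_t$. Given $\varepsilon>0$, uniform continuity of $\nu$ on $(x_1,\infty)$ yields $\delta>0$ such that $|\nu(y)-\nu(y')|<\varepsilon$ whenever $y,y'>x_1$ and $|y-y'|<\delta$. Taking $x_0\geq x_1$ without loss of generality and restricting to $x\geq x_0+\delta$, I split the inner $z$-integral at $z=\delta$. For $z\in[0,\delta]$ one has $x-z\geq x_0$, so (a) replaces $(t-u)^{-1}p_{t-u}(x-z)$ by $\nu(x-z)+o(1)$ uniformly, and uniform continuity turns $\nu(x-z)$ into $\nu(x)+O(\varepsilon)$; the factor $(x-z)/x=1+O(\delta/x_0)$ is absorbed. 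Integrating in $u$, the principal contribution equals $\nu(x)(1+o(1)+O(\varepsilon))\int_0^t\!\int_0^\delta q_u^*(z)\,dz\,du$. Because the normalized entrance law $q_u^*/n^*(u<\zeta)$ converges weakly to $\delta_0$ as $u\to 0^+$, this double integral equals $N^*(t)(1+o(1))$, which, combined with the first term ${\tt d}^*\nu(x)/\mu_t$, gives $q_t^*(x)/\mu_t = \nu(x)(1+o(1))$ uniformly for $x\geq x_0+\delta$.

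For (b)$\Rightarrow$(a) I use (\ref{eq:pt:formula:exp}). For $x\geq x_0$ and $z\geq 0$ one has $x+z\geq x_0$, so (b) replaces $q_u^*(x+z)$ by $\mu_u(\nu(x+z)+o(1))$; a $z=\delta$ split combined with uniform continuity of $\nu$ and the concentration of the dual entrance measure $q_{t-u}(dz)$ at $0$ as $t-u\to 0^+$ converts the inner integral into $\mu_u n(t-u<\zeta)\nu(x)(1+o(1))$. Summing with the term ${\tt d}q_t^*(x)={\tt d}\mu_t\nu(x)(1+o(1))$ and invoking the auxiliary mass identity then yields $p_t(x)=t\nu(x)(1+o(1))$ uniformly for $x\geq x_0$. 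The main obstacle in both directions is the uniform control of the tail contribution on $z\geq\delta$; this is handled via the weak convergence of the normalized entrance laws of $n^*$ and $n$ to $\delta_0$, supplemented by appropriate bounds on the kernel $y(t-u)^{-1}p_{t-u}(y)$ over a bounded range of $y$, which follow from (a) on $y\geq x_0$ and from general L\'evy estimates on $y\in(0,x_0)$.
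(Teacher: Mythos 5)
Your overall strategy coincides with the paper's: for (a)$\Rightarrow$(b) you normalize the convolution identity \eqref{eq:ACformula}, and for (b)$\Rightarrow$(a) you use \eqref{eq:pt:formula:exp}; in both directions you split the inner integral at a fixed $\delta$ coming from uniform continuity of $\nu$, and you invoke what is in effect \eqref{eq:n:relation} (your ``auxiliary mass identity''), which is indeed established exactly as you describe. The (b)$\Rightarrow$(a) sketch is essentially the paper's argument: all occurrences of $q_u^*(x+z)$ have argument $\geq x_0$, so the hypothesis applies everywhere, and the tail $z\geq\delta$ is killed by the uniform smallness of $\int_\delta^\infty q_{t-u}(dz)$ relative to $n(t-u<\zeta)$ (Proposition~\ref{prop:qt:tail}).

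However, there is a genuine gap in your (a)$\Rightarrow$(b) direction, and it sits precisely where the proof is hardest. After the $\delta$-split you are left with
\[
I_3(t,x)\;=\;\frac{1}{{\tt d^*}+N^*(t)}\int_0^t du\int_{\delta}^{x}q_u^*(z)\Bigl(1-\tfrac{z}{x}\Bigr)\frac{p_{t-u}(x-z)}{t-u}\,dz,
\]
and the problematic region is $z$ close to $x$, where the argument $x-z$ of $p_{t-u}(\cdot)/(t-u)$ is small and hypothesis (a) says nothing. Your phrase ``supplemented by appropriate bounds on the kernel $y(t-u)^{-1}p_{t-u}(y)$ over a bounded range of $y$, which follow \dots\ from general L\'evy estimates on $y\in(0,x_0)$'' does not identify what is actually needed: a uniform-in-$t$ smallness of $\int_0^t\!\int_0^{M}\tfrac{w}{x}\tfrac{p_u(w)}{u}\,dw\,du$. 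There is no ``general L\'evy estimate'' giving this; the paper obtains it by observing that $\tfrac{p_t(dz)}{t}e^{-qt}\,dt$ is the L\'evy measure of the bivariate infinitely divisible pair $(\tau,X_\tau)$ with $\tau\sim\mathrm{Exp}(q)$ independent of $X$ (Bertoin, Ch.~VI, Lemma~7), whence $\int_0^\infty\!\int_{\R}(1\wedge(t+|z|))\tfrac{p_t(dz)}{t}e^{-qt}\,dt<\infty$. This integrability, combined with Proposition~\ref{prop:qt:general} (to bound $q_u^*(z)\lesssim{\tt d^*}+N^*(u)$ for $z$ large) and Proposition~\ref{prop:qt:tail} (to control $z\in[\delta,x_3]$), is what makes $I_3(t,x)\to 0$ uniformly. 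Without naming this lemma your argument is not complete. A secondary, smaller issue: before you can even bound $I_3$ you need the a priori bound $q_u^*\lesssim{\tt d^*}+N^*(u)$ on the outer range, which is Proposition~\ref{prop:qt:general}; your sketch never invokes it, yet it is essential because the entrance-law mass over $[\delta,x]$ is not small in the sense your ``weak convergence to $\delta_0$'' argument would require once you multiply by a possibly large kernel.
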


This result will be used to establish the following relation between the asymptotic behaviour as $t$ goes to $0$ of $f_t(x)$ and 
this of $p_t(x)$. 

\begin{theorem}
\label{thm:ftx:tzero}
Assume that $\nu(x)$ is uniformly continuous on $(x_1,\infty)$ for some $x_1>0$. If there exists $x_0>0$ such that
\begin{equation}
  \label{eq:ptnu:asympt:tweak2}
	\lim_{t\to 0^+}\frac{p_t(x)}{t}=\nu(x)
\end{equation}
uniformly for $x\geq x_0$, then there exists $x_2>0$ such that
\begin{equation}
  \label{eq:ftnu:asympt:tweak}
	\lim_{t\to 0^+}\frac{f_t(x)}{t}=\nu(x)
\end{equation}
uniformly for $x\geq x_2$.
\end{theorem}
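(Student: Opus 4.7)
The starting point is the representation formula (\ref{eq:ft:formula}),
\begin{equation*}
f_t(x)=\int_0^t n(t-u<\zeta)\,q_u^*(x)\,du+{\tt d}\,q_t^*(x),\qquad x>0,
\end{equation*}
which reduces the analysis of $f_t$ to that of the entrance law $q_u^*$ for small $u$. Since $\nu(x)$ is uniformly continuous on $(x_1,\infty)$ and the hypothesis $p_t(x)/t\to \nu(x)$ holds uniformly for $x\geq x_0$, Theorem \ref{thm:asympt:t} converts the assumption into
\begin{equation*}
\lim_{u\to 0^+}\frac{q_u^*(x)}{{\tt d^*}+N^*(u)}=\nu(x),
\end{equation*}
uniformly for $x\geq x_0$. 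Given $\eta>0$, I would fix $\delta>0$ so that $(1-\eta)({\tt d^*}+N^*(u))\nu(x)\leq q_u^*(x)\leq (1+\eta)({\tt d^*}+N^*(u))\nu(x)$ for every $0<u\leq\delta$ and $x\geq x_0$.

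Plugging these two-sided bounds into (\ref{eq:ft:formula}) and factoring $\nu(x)$ out of the $u$-integral yields, for $0<t\leq\delta$ and $x\geq x_0$,
\begin{equation*}
|f_t(x)-\nu(x)\,B(t)|\leq \eta\,\nu(x)\,B(t),
\end{equation*}
where
\begin{equation*}
B(t):=\int_0^t n(t-u<\zeta)\bigl({\tt d^*}+N^*(u)\bigr)\,du+{\tt d}\bigl({\tt d^*}+N^*(t)\bigr).
\end{equation*}
The heart of the proof is then to establish the clean identity $B(t)=t$ for every $t\geq 0$.

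To prove $B(t)=t$, the natural route is to compute the Laplace transform in $t$. Because $n^*(\zeta\in\cdot)$ is the L\'evy measure of the subordinator $\tau^*$ with drift ${\tt d^*}$ and Laplace exponent $\kappa^*(\cdot,0)$, an elementary Fubini argument gives $\int_0^\infty e^{-qt}n^*(t<\zeta)\,dt=(\kappa^*(q,0)-{\tt d^*}q)/q$, whence
\begin{equation*}
\int_0^\infty e^{-qt}\bigl({\tt d^*}+N^*(t)\bigr)\,dt=\frac{\kappa^*(q,0)}{q^2},\qquad \int_0^\infty e^{-qt}n(t<\zeta)\,dt=\frac{\kappa(q,0)-{\tt d}q}{q}.
\end{equation*}
Since the first summand in $B(t)$ is a convolution, the Laplace transform of $B$ equals
\begin{equation*}
\frac{\bigl(\kappa(q,0)-{\tt d}q\bigr)\kappa^*(q,0)}{q^3}+\frac{{\tt d}\,\kappa^*(q,0)}{q^2}=\frac{\kappa(q,0)\,\kappa^*(q,0)}{q^3}.
\end{equation*}
Under the normalization (\ref{norm1}) of $L^*$ (together with the matching normalization of $L$ tacitly used throughout Section 2), the Wiener--Hopf factorization gives $\kappa(q,0)\kappa^*(q,0)=q$, so the Laplace transform of $B$ equals $1/q^2$ and inversion yields $B(t)=t$.

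Combining the two steps, $|f_t(x)/t-\nu(x)|\leq \eta\,\nu(x)$ for all $x\geq x_0$ and $0<t\leq\delta$. Uniform continuity of $\nu$ on $(x_1,\infty)$ together with the integrability $\int_1^\infty\nu(y)\,dy<\infty$ forces $\nu$ to be bounded on $(x_2,\infty)$ for $x_2:=\max(x_0,x_1)$ taken sufficiently large, so the right-hand side is at most $M\eta$ for a fixed constant $M$. Letting $\eta\to 0$ (which corresponds to shrinking $\delta$) then gives the uniform convergence of $f_t(x)/t$ to $\nu(x)$ on $[x_2,\infty)$. The step I expect to be the main obstacle is the identification $B(t)=t$: it rests on the precise constant in the Wiener--Hopf factorization, and one must be careful that the normalizations used here for $L$, $L^*$, $n$ and $n^*$ produce exactly the factor $1$ rather than some other positive constant.
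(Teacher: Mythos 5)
Your argument follows the same overall plan as the paper: decompose $f_t(x)/t-\nu(x)$ via the representation~(\ref{eq:ft:formula}), invoke Theorem~\ref{thm:asympt:t} for the uniform convergence of $q_u^*(x)/({\tt d^*}+N^*(u))$ to $\nu(x)$, and then collapse the remaining deterministic term. The one place where you genuinely diverge from the paper is in establishing the identity
\begin{equation*}
\int_0^t n(t-u<\zeta)\bigl({\tt d^*}+N^*(u)\bigr)\,du+{\tt d}\bigl({\tt d^*}+N^*(t)\bigr)=t\/,
\end{equation*}
which you reach by Laplace transforms and the Wiener--Hopf identity $\kappa(q,0)\kappa^*(q,0)=q$. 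The paper instead records this as~(\ref{eq:n:relation}) and derives it directly by integrating~(\ref{eq:sup:formula}) in $x$, using the trivial fact $\p\bigl(\overline{X}_u\in[0,\infty)\bigr)=1$, together with ${\tt d}\,{\tt d^*}=0$. Both routes are valid. The paper's computation is the more elementary one and sidesteps the normalization issue you rightly flag as the potential obstacle; your route does work, because under the Fristedt normalization~(\ref{fris}) (with constant $k=1$, as the paper notes) the Frullani integral gives $\kappa(q,0)\kappa^*(q,0)=\exp\bigl(\int_0^\infty(e^{-t}-e^{-qt})t^{-1}dt\bigr)=q$, and one also checks that the contribution of the atom $n^*(\zeta=\infty)=k^*$ cancels in the Laplace transform of $N^*$, so your stated formula $\int_0^\infty e^{-qt}({\tt d^*}+N^*(t))dt=\kappa^*(q,0)/q^2$ is correct even when the ladder process is killed. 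If you wanted to match the paper's economy, you would cite~(\ref{eq:n:relation}) rather than re-prove it, but your Wiener--Hopf derivation is a legitimate and instructive alternative. Your closing remark that uniform continuity together with $\int_1^\infty\nu<\infty$ forces $\nu$ to be bounded near infinity is also correct and supplies the small detail the paper leaves implicit.
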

As an immediate consequence of Theorems \ref{thm:asympt:t} and \ref{thm:ftx:tzero}, we obtain that whenever $p_t(x)/t$ 
tends uniformly on $(\varepsilon,\infty)$ to a uniformly continuous L\'evy density $\nu(x)$, then ${q_t^*(x)dx}/{{\tt d^*}+N^*(t)}$ and 
$f_t(x)dx/t$ tend weakly on $\{x>\varepsilon\}$ to $\nu(x)dx$ as $t\to0^+$. However, it seems that these weak convergence results 
are much more general and do not even require absolute continuity. These observations lead us to the following conjecture.\\

\noindent{\bf Conjecture}{\it $\;\;$
   For any L\'evy process $(X,\p)$ and every $\varepsilon>0$,
	  $$
		  \dfrac{\p(\overline{X}_t\in dx)}{t} \Rightarrow \nu(dx)\/,\quad\quad  \dfrac{q_t^*(dx)}{{\tt d^*}+N^*(t)} \Rightarrow \nu(dx)
		$$
		on $\{x>\varepsilon\}$, as $t\to0^+$.}

\medskip

We finish this series of results with the observation that the result from Theorem \ref{thm:asympt:t} can also be applied to determine 
the behaviour of the joint law $(X_t,\overline{X}_t)$. Note that in this case, we also require the regularity of $p_t(dx)$ and $\nu(dx)$ 
on both positive and negative half-lines. 
\begin{theorem}
\label{thm:asympt:t:joint}
Assume that $p_t(dx)=p_t(x)dx$ and $\nu(dx)=\nu(x)dx$, on $\R$ and $\R\setminus\{0\}$ respectively, with $\nu(x)$ being uniformly 
continuous on $\{|x|\geq \varepsilon\}$, $\varepsilon>0$. Then the joint law $\p(X_t\in dx,\overline{X}_t\in dy)$ is absolutely continuous 
on $\R\times(0,\infty)$ with density $h_t(x,y)$. If $p_t(x)/t$ tends to $\nu(x)$, as $t\to0^+$, uniformly on $x>x_2$, then there exists 
$x_1>0$ such that 
\begin{equation}
   \lim_{t\to 0^+}\dfrac{h_t(x,y)}{t} = \nu(y-x)\nu(y),
\end{equation} 
uniformly on $x,y\geq x_1$.

\end{theorem}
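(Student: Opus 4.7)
My first step is to derive for $h_t(x,y)$ an integral representation in the spirit of \eqref{eq:pt:formula:exp} and \eqref{eq:ft:formula}. Splitting the trajectory at the last time of the maximum $G_t:=\sup\{s\leq t:X_s=\overline{X}_s\}$, the joint distribution of $(G_t,\overline{X}_{G_t})$ is described through the entrance law $q_u^*$, while conditionally on $(G_t,\overline{X}_{G_t})=(u,y)$ the post-$G_t$ piece is an excursion of $\overline{X}-X$ of lifetime exceeding $t-u$ whose value at time $t-u$ is $y-x$. Combining these should give the identity
\[
h_t(x,y)=\int_0^t q_u^*(y)\,q_{t-u}(y-x)\,du,\qquad x<y.
\]
Absolute continuity of the joint law on $\R\times(0,\infty)$ then follows: under $(H)$, $q_u^*$ is absolutely continuous (see the discussion following \eqref{2598}), and $q_{t-u}$ is absolutely continuous by the same observation applied to the dual process $X^{*}=-X$, whose marginal density $p_t^{*}(x)=p_t(-x)$ inherits absolute continuity from $(H)$.

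\textbf{Step 2: Asymptotics of the two factors.} Next I invoke Theorem \ref{thm:asympt:t}. Directly,
\[
\frac{q_u^*(y)}{{\tt d}^{*}+N^{*}(u)}\longrightarrow\nu(y)\quad\text{uniformly for }y\geq x_0.
\]
Applied to $X^{*}$ (for which the entrance law of the infimum excursions is precisely our $q_t$, and the drift and $N$-quantity of the descending ladder become our ${\tt d}$ and $N(t):=\int_0^t n(s<\zeta)\,ds$), the same theorem gives
\[
\frac{q_{t-u}(z)}{{\tt d}+N(t-u)}\longrightarrow\nu(z)\quad\text{uniformly for }z\geq x_0,
\]
once the companion hypothesis $p_t^{*}(z)/t\to\nu(z)$ on the positive half-line is in hand. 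I would obtain it by upgrading the general weak convergence $\p(X_t\in dx)/t\Rightarrow\nu(dx)$ on $\{|x|>\varepsilon\}$ (recalled in Section 3) to a uniform convergence of densities on the negative half-line of the original process, using the absolute continuity of $p_t$ on all of $\R$ and the uniform continuity of $\nu$ on $\{|x|\geq\varepsilon\}$ that are part of the hypotheses of the present theorem.

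\textbf{Step 3: Combining, and the main obstacle.} Plugging both equivalents into the representation of Step~1 yields
\[
h_t(x,y)\sim\nu(y)\,\nu(y-x)\int_0^t\bigl({\tt d}^{*}+N^{*}(u)\bigr)\bigl({\tt d}+N(t-u)\bigr)\,du,
\]
so the announced conclusion reduces to showing that the normalising integral is equivalent to $t$ as $t\to 0^+$ and to justifying the interchange of limit and integration. I expect this to be the delicate step: the convergence provided by Theorem \ref{thm:asympt:t} is uniform in the spatial variable but not in $u\in[0,t]$, and near the endpoints either $N^{*}(u)$ or $N(t-u)$ degenerates while the two drifts cannot compensate simultaneously because ${\tt d}\,{\tt d}^{*}=0$. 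I would handle this by splitting the integration range into a central subinterval where the uniform asymptotics apply cleanly and two thin boundary layers on which $q_u^{*}$ and $q_{t-u}$ are controlled by crude upper bounds extracted from \eqref{eq:ACformula} and its dual analogue. Choosing $x_1:=\max(x_0,x_2,\varepsilon)$ should then give the claimed uniform convergence on $\{x,y\geq x_1\}$.
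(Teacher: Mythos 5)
The paper does not actually supply a proof of this theorem (the proofs section jumps from Theorem~\ref{thm:ftx:tzero} directly to the estimates of Theorem~\ref{thm:qtx:bounds}), so there is no argument in the text to compare yours against; your proposal has to be judged on its own.

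Your Step~1 representation $h_t(x,y)=\int_0^t q_u^*(y)\,q_{t-u}(y-x)\,du$ is the right one: integrating it in $y$ and substituting $z=y-x$ recovers the absolutely continuous part of \eqref{eq:pt:formula:exp}, and integrating in $x$ recovers \eqref{eq:sup:formula}, so it is consistent with the paper's identities. Two points in Step~2--3, however, are not minor delicacies but genuine gaps. First, applying Theorem~\ref{thm:asympt:t} to the dual process $X^*$ produces the density $\nu^*(z)=\nu(-z)$ of the L\'evy measure of $X^*$ on $(0,\infty)$, not $\nu(z)$; so the limit you would actually obtain from your two equivalents is $\nu(y)\,\nu^*(y-x)=\nu(y)\,\nu(x-y)$. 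For a spectrally positive process (such as the subordinator-minus-drift example of Section~\ref{sec:examples}.3, which does satisfy all the stated hypotheses) this factor is identically zero, whereas the theorem asserts a strictly positive limit $\nu(y-x)\nu(y)$; in that example one even has $\overline{X}_t-X_t\le at$, so $h_t(x,y)\equiv 0$ as soon as $t<(y-x)/a$, confirming that the limit must be $0$ there.

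Second, and more fundamentally, the normalising integral in your Step~3 is not asymptotically $t$. Set $J(t)=\int_0^t({\tt d^*}+N^*(u))({\tt d}+N(t-u))\,du$. Using $N(0)=0$, ${\tt d\,d^*}=0$ and differentiating,
\begin{equation*}
J'(t)={\tt d}\,N^*(t)+{\tt d^*}\,N(t)+\int_0^t N^*(u)\,n(t-u<\zeta)\,du=t
\end{equation*}
by \eqref{eq:n:relation}, so $J(t)=t^2/2$ exactly. Consequently, if your Steps~1--2 are carried out (with the $\nu^*$ correction) the conclusion that follows is $h_t(x,y)\sim \tfrac{1}{2}t^2\,\nu(y)\,\nu^*(y-x)$, i.e.\ $h_t(x,y)/t\to 0$, not $\nu(y-x)\nu(y)$. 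This is off by a full factor of $t$, and no amount of care with boundary layers near $u=0$ or $u=t$ can rescue it, since $J$ is computed exactly. Your plan therefore cannot be completed as written; the discrepancy indicates that the statement to be proved should carry $t^2$ (and $\nu^*$) in place of $t$ (and $\nu$), or needs a different interpretation, and that should be resolved before any of the technical splitting you sketch is attempted.
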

 
\begin{remark}
Also in this case it seems reasonable to expect that the corresponding weak convergence result holds in full generality, i.e. 
for every L\'evy process $(X,\p)$ and $\varepsilon>0$,
$$
\frac{1}{t}\p(X_t\in dx,\overline{X}_t-X_t\in dy) \Rightarrow \nu(dx)\nu(dy)\/,\quad t\to 0^+\/,
$$
on $\{x>\varepsilon, y>\varepsilon\}$. Note that this result can be deduced from the Conjecture above together with the well-known 
fact that if $e$ is an exponentially distributed random variable which is independent of $X$, then the random variables 
$\overline{X}_e-X_e$ and $\overline{X}_{e}$ are independent. Moreover $\overline{X}_e-X_e$ has the same law as 
$\overline{X^*_e}$.
\end{remark}

\subsection{Estimates in small $t$ and large $x$ regime} 

Since the uniform convergence of $p_t(x)/t$ to $\nu(x)$ implies that $p_t(x)\approx t\nu(x)$ for $x$ large and $t$ small, it is natural to 
ask how those estimates are related to the corresponding estimates of $q_t^*(dx)$ and $f_t(x)$. The following results are intended to 
give answers to these questions. For this purpose, we first state some auxiliary results related to general bounds of the entrance law. 
They are crucial in the proofs, but since we prove them in a fairly large generality, they also have an interest of their own. 
That is why they are stated here as separate results. Recall that in the following Proposition we do not require absolute continuity.

\begin{proposition}
\label{prop:qt:tail}
For every $\varepsilon>0$ and $x_0>0$ there exists $t_0>0$ such that 
\begin{equation}
\label{eq:qt:tail}
   q_t^*((x,\infty))\leq \varepsilon \,\frac{n^*(t<\zeta)}{h^*(x)}\/,\quad \quad  q_t((x,\infty))\leq \varepsilon \,\frac{n(t<\zeta)}{h(x)}\/,
\end{equation}
for every $0<t<t_0$ and $x>x_0$. If ${\tt d}=0$ $($resp.~${\tt d^*}=0)$, then we can take $x_0=0$ in the estimates of 
$q_t^*((x,\infty))$ $($resp.~$q_t((x,\infty)))$. Moreover, for given $\varepsilon>0$ and $t_0>0$, there exists $x_0>0$ such that 
\begin{equation}
\label{eq:qt:tail2}
   q_t^*((x,\infty))\leq \varepsilon \,\frac{n^*(t<\zeta)}{h^*(x)}\/,\quad \quad q_t((x,\infty))\leq \varepsilon \,\frac{n(t<\zeta)}{h(x)}\/,
\end{equation}
for every $x>x_0$ and $t<t_0$.
\end{proposition}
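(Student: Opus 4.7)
The plan is to combine three ingredients. First, the classical fact that the renewal function $h^*$ of the descending ladder height $H^*$ is harmonic for the sub-Markov semigroup $\mathbb{Q}^*$, i.e.~$\e_y^{\mathbb{Q}^*}[h^*(X_t)]=h^*(y)$ for every $y>0$, $t>0$. Second, Markov's inequality with the non-decreasing function $h^*$, which gives $\mathbb{Q}_y^*(X_t>x)\leq h^*(y)/h^*(x)$. Third, the Markov property of $n^*$ expressed via the entrance law identity (\ref{4527}). Combining these, for any $0<s<t$,
\[
q_t^*((x,\infty)) \;=\; \int q_s^*(dy)\,\mathbb{Q}_y^*(X_{t-s}>x) \;\leq\; \frac{1}{h^*(x)}\int h^*(y)\,q_s^*(dy),
\]
and applying (\ref{4527}) with $f=h^*$ together with the harmonicity of $h^*$ shows that $C:=\int h^*(y)\,q_s^*(dy)=n^*(h^*(X_s),\,s<\zeta)$ is independent of $s>0$; its finiteness can be checked, for instance, by integrating the identity in $s$ and using (\ref{2598}). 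This yields the key estimate
\[
q_t^*((x,\infty))\,h^*(x) \;\leq\; C, \quad t,x>0.
\]

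In the regular case ${\tt d}=0$, the measure $n^*$ is infinite and $n^*(t<\zeta)\to+\infty$ as $t\to 0^+$. Given $\varepsilon>0$, choose $t_0>0$ so that $n^*(t<\zeta)\geq C/\varepsilon$ for $0<t<t_0$; the estimate above then immediately gives both (\ref{eq:qt:tail}) and (\ref{eq:qt:tail2}) with $x_0=0$.

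In the non-regular case ${\tt d}>0$, one has $n^*=\gamma\,\mathbb{Q}_0^*$ and $n^*(t<\zeta)=\gamma\,\p(\tau_0^->t)\to\gamma$, so $n^*(t<\zeta)$ is bounded away from $0$ for $t<t_0$. Using $q_t^*(dy)=\gamma\,\p(X_t\in dy,\,\tau_0^->t)$ and applying Markov's inequality once more, the required bound reduces to showing
\[
\sup_{0<t<t_0}\e\bigl[h^*(X_t)\ind_{X_t>x,\,t<\tau_0^-}\bigr]\;\longrightarrow\; 0\quad\text{as }x\to\infty,
\]
which follows from the $L^1$-boundedness and uniform integrability of the non-negative $\p_0$-supermartingale $h^*(X_{\cdot\wedge\tau_0^-})$. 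This gives a threshold $x_0>0$ beyond which both (\ref{eq:qt:tail}) and (\ref{eq:qt:tail2}) hold. The analogous estimates on $q_t((x,\infty))$ follow by applying the same argument to the dual process $-X$, with $n,h$ in place of $n^*,h^*$.

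The main technical obstacle lies in the non-regular case, where one must establish uniform integrability and tail decay of $h^*(X_t)\ind_{t<\tau_0^-}$ uniformly in small $t$; the regular case is conceptually cleaner, since the Markov-inequality estimate $q_t^*((x,\infty))h^*(x)\leq C$ combines directly with the blow-up of $n^*(t<\zeta)$ to yield the bound uniformly in $x>0$. Secondarily, verifying that $C<\infty$ in full generality also requires some care, and is handled via the renewal identity (\ref{2598}).
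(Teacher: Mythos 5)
Your starting point — harmonicity of $h^*$ for the killed semigroup, Markov's inequality, and the Markov property of $n^*$ — is essentially the right idea, and your key estimate $q_t^*((x,\infty))\,h^*(x)\leq C$ is correct. But you lose the $x$-dependence at the Markov step, and this creates a genuine gap for \eqref{eq:qt:tail2}. In the regular case you claim that $q_t^*((x,\infty))\,h^*(x)\leq C$ combined with $n^*(t<\zeta)\to\infty$ yields both \eqref{eq:qt:tail} and \eqref{eq:qt:tail2}; the first follows, but \eqref{eq:qt:tail2} asks: for an \emph{arbitrary given} $t_0>0$, find $x_0$ such that the bound holds for all $x>x_0$ and $t<t_0$. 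Your estimate only gives $q_t^*((x,\infty))\,h^*(x)/n^*(t<\zeta)\leq C/n^*(t_0<\zeta)$, a fixed positive constant which does not tend to zero as $x\to\infty$. You need the left-hand side to decay in $x$, and your coarse bound has discarded precisely that information.

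The paper avoids this by not applying Markov's inequality at the level of the transition kernel. Instead it uses the $h$-transform that identifies $n^*$ with the law $\p^{\uparrow}$ of the process conditioned to stay positive: $q_t^*((x,\infty)) = n^*(X_t>x,\,t<\zeta)=\e^{\uparrow}\bigl[\ind_{\{X_t>x\}}/h^*(X_t)\bigr]\leq \p^{\uparrow}(X_t>x)/h^*(x)$, where the last inequality uses only monotonicity of $h^*$. The upper bound $\p^{\uparrow}(X_t>x)$ tends to $0$ both as $t\to 0^+$ (right continuity of paths under the probability $\p^{\uparrow}$) and as $x\to\infty$ uniformly in $t\leq t_0$ (compactness / finiteness of $\overline{X}_{t_0}$ under $\p^{\uparrow}$); this delivers \eqref{eq:qt:tail} and \eqref{eq:qt:tail2} in one stroke, without splitting into regular/irregular cases and without the uniform-integrability claim that you assert but do not prove in the case ${\tt d}>0$. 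The fix to your proof is to keep $\e_y^{\mathbb{Q}^*}[h^*(X_{t-s})\ind_{\{X_{t-s}>x\}}]$ instead of bounding it by $h^*(y)$; after integrating against $q_s^*(dy)$ and applying \eqref{4527}, this is exactly $n^*\bigl(h^*(X_t)\ind_{\{X_t>x\}},\,t<\zeta\bigr)=\p^{\uparrow}(X_t>x)$, which recovers the paper's argument. Note also that your separate treatment of the irregular case does not actually cover \eqref{eq:qt:tail}: the UI argument gives you a threshold $x_0$ large enough, but \eqref{eq:qt:tail} takes $x_0$ as \emph{given} (possibly small) and asks you to produce $t_0$, which your argument does not do for $x$ between the given $x_0$ and the UI threshold.
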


\noindent For all the remaining results we will assume that hypothesis $(H)$ on absolute continuity is in force.

\begin{proposition}
\label{prop:qt:general}
Assume that there exist $x_0, t_0>0$ such that $\nu(x)$ and $p_t(x)/t$ are bounded for $x>x_0$ and $t<t_0$. Then there exists 
$x_1>0$ and $c>0$ such that
\begin{equation}
   \label{eq:qt:general}
	  \frac{q_t^*(x)}{{\tt d^*}+N^*(t)}\leq c\/,
\end{equation}
for every $x>x_1$ and $t<t_0$. Conversely, if $(\ref{eq:qt:general})$ holds, then $p_t(x)/t$ is bounded for large $x$ and small $t$.
\end{proposition}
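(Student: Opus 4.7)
The plan is to establish both directions using the integral representations~(\ref{1732}) and~(\ref{eq:pt:formula:exp}). A key ingredient is the identity
\begin{equation*}
\int_0^t n^*(u<\zeta)n(t-u<\zeta)\,du+{\tt d}n^*(t<\zeta)+{\tt d^*}n(t<\zeta)=1,
\end{equation*}
obtained by integrating~(\ref{eq:sup:formula}) over $x$. Integrating this in $t$ and exchanging the roles of $N$ and $N^*$ via an integration by parts (using $N(0)=N^*(0)=0$) yields
\begin{equation*}
\int_0^t N^*(u)\,n(t-u<\zeta)\,du+{\tt d}N^*(t)+{\tt d^*}N(t)=t,
\end{equation*}
so that, together with ${\tt d}{\tt d^*}=0$,
\begin{equation*}
\int_0^t({\tt d^*}+N^*(u))\,n(t-u<\zeta)\,du=t-{\tt d}N^*(t).
\end{equation*}
This relation will be used repeatedly.

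For the converse direction, I substitute the hypothesis $q_u^*(x)\leq c({\tt d^*}+N^*(u))$ (for $x>x_1$, $u<t_0$) into~(\ref{eq:pt:formula:exp}). Since $x+z\geq x>x_1$ for every $z\geq 0$, this gives
\begin{equation*}
p_t(x)\leq c\int_0^t({\tt d^*}+N^*(u))n(t-u<\zeta)\,du+c{\tt d}({\tt d^*}+N^*(t))=c(t-{\tt d}N^*(t))+c{\tt d}N^*(t)=ct,
\end{equation*}
using the identity above and ${\tt d}{\tt d^*}=0$. This establishes boundedness of $p_t(x)/t$ for $x>x_1$ and $t<t_0$.

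For the forward direction, I treat the cases ${\tt d}>0$ and ${\tt d}=0$ separately. If ${\tt d}>0$, then~(\ref{eq:pt:formula:exp}) directly yields $p_t(x)\geq{\tt d}q_t^*(x)$, hence $q_t^*(x)\leq Ct/{\tt d}$ for $x>x_0$ and $t<t_0$; moreover ${\tt d}>0$ forces $(-\infty,0)$ to be non-regular, so $n^*(s<\zeta)\to n^*(E)\in(0,\infty)$ as $s\to 0^+$ and hence $N^*(t)\geq c_1 t$ for $t$ small, which gives $q_t^*(x)\leq c'N^*(t)\leq c'({\tt d^*}+N^*(t))$. If ${\tt d}=0$, I use~(\ref{1732}) in density form, splitting $tq_t^*(x)={\tt d^*}p_t(x)+I_1+I_2$ with
\begin{equation*}
I_1=\int_0^t\!\!\int_0^{x-x_0}p_{t-s}(x-z)q_s^*(z)\,dz\,ds,\quad I_2=\int_0^t\!\!\int_{x-x_0}^x p_{t-s}(x-z)q_s^*(z)\,dz\,ds.
\end{equation*}
The term ${\tt d^*}p_t(x)\leq C{\tt d^*}t$ is controlled by the hypothesis, and for $I_1$ the bound $p_{t-s}(x-z)\leq C(t-s)$ combined with $\int_0^\infty q_s^*(z)\,dz=n^*(s<\zeta)$ gives $I_1\leq CtN^*(t)$.

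The technical core of the argument is the estimate of $I_2$: in this region $x-z\leq x_0$, so $p_{t-s}$ falls outside the regime covered by the hypothesis and may even be unbounded near the origin. My plan is to invoke Proposition~\ref{prop:qt:tail} to control $q_s^*((x-x_0,\infty))\leq\varepsilon n^*(s<\zeta)/h^*(x-x_0)$ for $x$ sufficiently large and $s<t_0$, and to combine it with the $L^1$-estimate $\int_0^{x_0}p_{t-s}(y)\,dy\leq 1$. Converting this tail control of $q_s^*$ into a genuine bound on the convolution against the unbounded density $p_{t-s}$ is the main obstacle; I expect this to require either an integration by parts based on the tail function $Q_s^*(z)=q_s^*((z,\infty))$---which transforms $I_2$ into boundary terms together with a remainder involving the derivative of $p_{t-s}$ weighted by $Q_s^*$---or a bootstrap argument in which the smallness of $\varepsilon/h^*(x-x_0)$ allows one to absorb the unknown back on the left-hand side. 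Once a bound $I_2\leq Ct({\tt d^*}+N^*(t))$ is established for $x$ large enough, dividing through by $t({\tt d^*}+N^*(t))$ completes the forward direction.
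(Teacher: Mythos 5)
Your converse direction is correct and essentially the paper's own argument: substitute the bound on $q_u^*$ into \eqref{eq:pt:formula:exp} and invoke the identity \eqref{eq:n:relation}. The split into cases ${\tt d}>0$ and ${\tt d}=0$ for the forward direction is legitimate, and the ${\tt d}>0$ branch is fine, but in the main case ${\tt d}=0$ there is a genuine gap that you have correctly located and not closed. Neither of the two escape routes you sketch for $I_2$ can work under the stated hypotheses. The integration by parts requires some differentiability or monotonicity of $y\mapsto p_{t-s}(y)$ near $0$, which is not assumed. The bootstrap does not contract: setting $M=\sup q_s^*(z)/({\tt d^*}+N^*(s))$ over the relevant range of $z$ and $s$, the only available bound is
\begin{equation*}
I_2\leq M\int_0^t({\tt d^*}+N^*(s))\Bigl(\int_0^{x_0}p_{t-s}(w)\,dw\Bigr)ds\leq M\,t\bigl({\tt d^*}+N^*(t)\bigr),
\end{equation*}
and after dividing by $t({\tt d^*}+N^*(t))$ this gives $M\leq C+M$, which is vacuous. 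Proposition~\ref{prop:qt:tail} only controls the tail mass $q_s^*((x-x_0,\infty))$, not the density, so it cannot be paired with the $L^1$ bound $\int_0^{x_0}p_{t-s}\leq 1$ to extract a pointwise estimate: you have a measure bound on $q_s^*$ and a measure bound on $p_{t-s}$, and you need one of the two to be a supremum bound on exactly the set where the hypothesis gives you nothing ($p_{t-s}$ near $0$, or $q_s^*$ itself, which is what you are proving).

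The paper's route avoids this obstruction by working with the two-variable transition density $q_t^*(x,y)$ of the killed process under $\mathbb{Q}_x^*$ rather than with the entrance law directly. Starting from $x<r:=x_0/4$ and applying the strong Markov property at the first passage time $\tau_r$ above $r$, one splits according to whether $X_{\tau_r}\in(r,y/2)$ (then $q_{t-u}^*(z,y)\leq p_{t-u}(y-z)$ with $y-z>y/2>x_0$, so the hypothesis on $p$ applies) or $X_{\tau_r}\in(y/2,\infty)$ (then by the Ikeda--Watanabe formula the overshoot density carries a factor $\nu(z-w)$ with $z-w>y/2-r>x_0$, so the hypothesis on $\nu$ applies). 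All singular behaviour of $p$ near the origin is thereby sidestepped: the argument never convolves $q_s^*$ against $p$ at small argument. The resulting bound is in terms of $\p_x(\tau_0^->t)$ and $\int_0^t\p_x(\tau_0^->u)\,du$, and dividing by $h^*(x)$ and letting $x\to 0^+$ via Lemma~\ref{lem:conv} produces exactly the factor ${\tt d^*}+N^*(t)$. If you want to salvage your approach based on \eqref{1732}, you would need an additional assumption (e.g.\ boundedness of $p_t$ on all of $(0,\infty)$ for each fixed $t$, or monotonicity near $0$), which is strictly stronger than what the proposition requires.
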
	

In the next results, we establish relations between sharp estimates of $p_t(x)$ and $q_t^*(x)$. The regularity condition 
\eqref{eq:nu:monoton} below is not very restrictive and it holds in particular, when $\nu(x)$ is non-increasing or it is comparable 
with a non-increasing functions (for large $x$). Condition \eqref{eq:nu:double} holds in particular when $\nu(x)$ is
regularly or slowly varying at infinity. We write $f(t,x)\stackrel{x_0,t_0}{\approx}g(t,x)$ if there exists a constant 
$c=c(x_0,t_0)>0$ (possibly depending on $x_0$, $t_0$) such that $c^{-1}\,f(t,x)\leq g(t,x) \leq c\,f(t,x)$, for every $x$ and $t$ 
belonging to some domain depending $x_0$ and $t_0$. The notations $f(t,x)\stackrel{x_0,t_0}{\leq}g(t,x)$ and 
$f(t,x)\stackrel{x_0,t_0}{\geq}g(t,x)$ mean existence of a constant $c=c(x_0,t_0)>0$ such that $f(t,x)\leq c\,g(t,x)$ and 
$f(t,x)\geq c\,g(t,x)$ hold respectively, for every $x$ and $t$ belonging to some domain depending $x_0$ and $t_0$. 
		
\begin{theorem}
	\label{thm:qtx:bounds}
	  Assume that for every $r>0$ there exists $c=c(r)>0$ such that 
	   \begin{eqnarray}
	  \label{eq:nu:monoton}
	  \nu(y)\leq c(r)\nu(x)\/,
	\end{eqnarray}
	for every $y>x>r$, and
	\begin{eqnarray}
	 \label{eq:nu:double}
	 \nu(x)\leq c(r)\,\nu(2x)\/,\quad \textrm{ whenever }x>r\/.
	\end{eqnarray}
Then the following conditions are equivalent
		\begin{itemize}
    \item[$(i)$] For every $x_0,t_0>0$, 
		\begin{equation}
	  \label{eq:ptnu:bounds}
		p_t(x) \stackrel{x_0,t_0}{\approx} t\nu(x),
	  \end{equation}
		for every $t<t_0$ and $x>x_0$.
		\item[$(ii)$] For every $x_0,t_0>0$, 
		\begin{eqnarray}
		  \label{eq:qtx:bounds}
		   q_t^*(x) \stackrel{x_0,t_0}{\approx}\nu(x)({\tt d^*}+N^*(t)),
		\end{eqnarray}
		for every $t<t_0$ and $x>x_0$.
		\end{itemize}
\end{theorem}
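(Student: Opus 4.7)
The argument will proceed by inserting each hypothesis into one of the two dual integral identities available: formula (\ref{eq:pt:formula:exp}), which expresses $p_t$ as a convolution of the entrance laws $q^*$ and $q$, and formula (\ref{1732}), which expresses $tq_t^*$ in terms of $p_t$ and $q^*$ itself. The regularity conditions (\ref{eq:nu:monoton}) and (\ref{eq:nu:double}) on $\nu$ are precisely what is needed to propagate pointwise comparisons through these convolutions; the a priori pointwise bound of Proposition~\ref{prop:qt:general} and the tail estimate of Proposition~\ref{prop:qt:tail} will then control the resulting boundary contributions. A crucial additional ingredient is the fluctuation identity
\[
\int_0^t \bigl({\tt d}^{*}+N^*(u)\bigr)\,n(t-u<\zeta)\,du + {\tt d}\bigl({\tt d}^{*}+N^*(t)\bigr) = t\qquad (t>0),
\]
obtained by Laplace inversion of the Wiener--Hopf relation $\kappa(q,0)\kappa^{*}(q,0)=q$ (valid under the normalization (\ref{norm1})) together with ${\tt d}{\tt d}^{*}=0$.

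For the direction $(ii)\Rightarrow(i)$ the argument is rather direct: inserting (\ref{eq:qtx:bounds}) into (\ref{eq:pt:formula:exp}) and using near-monotonicity to extract $\nu(x+z)\le c\nu(x)$ for every $z\ge 0$ produces the upper bound $p_t(x)\le c\,t\,\nu(x)$ via the displayed fluctuation identity. For the lower bound one restricts the inner integral to $z\in[0,x]$ --- where doubling yields $\nu(x+z)\ge c^{-1}\nu(x)$ --- and disposes of $z>x$ using Proposition~\ref{prop:qt:tail} applied to $q_{t-u}$, which implies $q_{t-u}([0,x])\ge (1-\varepsilon)\,n(t-u<\zeta)$ for $x$ sufficiently large.

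For the direction $(i)\Rightarrow(ii)$ I would start from (\ref{1732}) and split the inner $z$-integral at $x/2$. On $[0,x/2]$ the bound $\nu(x-z)\le c\nu(x)$ (from monotonicity and doubling) together with $(i)$ yields a contribution of order $c\,t\,\nu(x)\,N^*(t)$; the matching lower bound on $q_t^*(x)$ comes from the same piece via the reverse inequality, together with the term ${\tt d}^{*}p_t(x)$ which supplies the ${\tt d}^{*}\nu(x)$ component. The upper bound on the remaining piece $z\in[x/2,x]$ is the principal obstacle: the convolution loses the $\nu$-decay at $z\to x$, and the uniform bound $q_s^*(z)\le c({\tt d}^{*}+N^*(s))$ of Proposition~\ref{prop:qt:general} is by itself too crude to reconstitute a $\nu(x)$ factor. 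To recover it, I invoke Proposition~\ref{prop:qt:tail} at level $x/2$ --- making $q_s^*((x/2,\infty))$ arbitrarily small compared to $h^*(x/2)$ --- and then feed the resulting sharper estimate back into (\ref{1732}); a dyadic iteration on the shells $(x/2^{k+1},x/2^{k})$, whose convergence is guaranteed by the integrability of $\nu$ on $(x_0,\infty)$, closes the upper bound at the desired order $c\,t\,\nu(x)({\tt d}^{*}+N^*(t))$.
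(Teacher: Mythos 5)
Your plan handles three of the four quarters of the equivalence essentially as the paper does: the direction $(ii)\Rightarrow(i)$ (plug $(ii)$ into (\ref{eq:pt:formula:exp}), use (\ref{eq:nu:monoton}) for the upper bound, (\ref{eq:nu:double}) plus Proposition \ref{prop:qt:tail} for the lower, and close with the fluctuation identity) is correct, and for the lower bound in $(i)\Rightarrow(ii)$ your use of (\ref{1732}) restricted to $z<x/2$ works, while the paper instead works from the weighted identity (\ref{eq:ACformula}), where the factor $(x-z)/(t-u)$ makes the time-integral reduce to $N^*(t)$ without a detour; this is a cosmetic difference.

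The genuine gap is precisely where you flag the ``principal obstacle,'' and the proposed dyadic fix does not close it. You need the contribution $\int_0^t ds\int_{x/2}^x p_{t-s}(x-z)\,q^*_s(z)\,dz$ (equivalently $\int_0^t ds\int_0^{x/2}p_{t-s}(w)\,q_s^*(x-w)\,dw$) to produce a factor $\nu(x)$, and none of the tools you invoke can do that: Proposition \ref{prop:qt:general} yields only $q_s^*(x-w)\le c({\tt d^*}+N^*(s))$ with no $\nu$-dependence; Proposition \ref{prop:qt:tail} controls the \emph{tail mass} $q_s^*((x/2,\infty))$, and its $\varepsilon$ improves by shrinking $t_0$ or enlarging $x_0$, not at the quantitative rate $\nu(x)$ — in any case here $q_s^*$ is integrated against a weight $p_{t-s}(w)$ that may blow up near $w=0$, so a tail estimate is not the right currency; and a dyadic induction in $z$ has no base case, since through (\ref{1732}) $q_t^*(x)$ is coupled to $q_s^*(z)$ for all $z$ up to $x$, including the top shell you have not yet bounded, so the $\nu$-prefactor cannot be bootstrapped from anywhere. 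The paper closes this piece by a different mechanism altogether: it works with the killed semigroup $q_t^*(x,\cdot)$ under $\mathbb{Q}^*_x$, decomposes at the first passage time $\tau_r$ above a fixed small level $r=x_0/4$, and distinguishes whether $X_{\tau_r}$ lands below or above $y/2$. In the big-jump case $X_{\tau_r}>y/2$ it applies the Ikeda--Watanabe formula, which expresses the first-passage triple law as $q_u^*(x,w)\,\nu(z-w)\,du\,dw\,dz$, and since $z-w>y/4$ the $\nu$-regularity turns $\nu(z-w)$ into $c\,\nu(y)$; in the small-overshoot case $X_{\tau_r}<y/2$ the subsequent transition has argument $y-z>y/2$ so $(i)$ applies directly. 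One then recovers the entrance law via $q_t^*(y)=\lim_{x\to0^+}q_t^*(x,y)/h^*(x)$ and Lemma \ref{lem:conv}. This overshoot/Ikeda--Watanabe mechanism, which injects the $\nu$ factor from the jump kernel rather than from any a priori bound on $q^*$, is the missing ingredient in your argument.
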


\begin{remark}
 \label{rem:thmconditions}
  Note that not all the regularity of the L\'evy measure density $\nu(x)$ is necessary to show upper or lower estimates separately. 
  More precisely, the condition \eqref{eq:nu:monoton} is sufficient to show that the lower bounds in \eqref{eq:ptnu:bounds} implies 
  the lower bounds in \eqref{eq:qtx:bounds} and  the upper bounds in \eqref{eq:qtx:bounds} implies the upper bounds in 
  \eqref{eq:ptnu:bounds}. The condition \eqref{eq:nu:double} is sufficient to prove that the lower bounds in \eqref{eq:qtx:bounds} 
  implies the lower bounds in \eqref{eq:ptnu:bounds}. Finally, both conditions \eqref{eq:nu:monoton} and \eqref{eq:ptnu:bounds} 
  are required to show that the upper bounds in \eqref{eq:ptnu:bounds} implies the upper bounds in \eqref{eq:qtx:bounds}.
\end{remark}

\begin{remark}
   The statement of Theorem $\ref{thm:qtx:bounds}$ holds true if we replace "for every $x_0,t_0>0$" by 
   "there exists $x_0,t_0>0$" in conditions $(i)$ and $(ii)$ simultaneously. Remark $\ref{rem:thmconditions}$ can be also applied 
   in this case.  
\end{remark}

\noindent We use the results of Theorem \ref{thm:qtx:bounds} to study the corresponding estimates for a density of a supremum 
process. Theorem \ref{thm:supremum:bounds} is a justification of an intuitively obvious observation that a large level $x>0$ is 
obtained by the supremum process at given (small) time $t$ by one big jump, which is described by the L\'evy measure density 
$\nu(x)$.  

\begin{theorem}
\label{thm:supremum:bounds}
Assume that \eqref{eq:nu:monoton} holds. Then if for every $x_0,t_0>0$
$$
  p_t(x)\stackrel{x_0,t_0}{\geq} t\nu(x)\/,\quad t<t_0,x>x_0,
$$
then for every  $x_0,t_0>0$
$$
 f_t(x)\stackrel{x_0,t_0}{\geq} t\nu(x)\/,\quad t<t_0,x>x_0\/.  
$$
If additionally \eqref{eq:nu:double} holds, then if for every $x_0,t_0>0$  
$$
  p_t(x)\stackrel{x_0,t_0}{\leq} t\nu(x)\/,\quad t<t_0,x>x_0, 
$$
then for every $x_0,t_0>0$ 
$$
 f_t(x)\stackrel{x_0,t_0}{\leq} t\nu(x)\/,\quad t<t_0,x>x_0\/.  
$$
In particular, if both \eqref{eq:nu:monoton} and \eqref{eq:nu:double} hold and for every $x_0,t_0>0$ 
$$
p_t(x)\stackrel{x_0,t_0}{\approx} t\nu(x)\/,\quad t<t_0,x>x_0,
$$
then for every $x_0,t_0>0$ 
\begin{equation}
  \label{eq:supremum:comparability}
f_t(x)\stackrel{x_0,t_0}{\approx} p_t(x) \stackrel{x_0,t_0}{\approx}  t\nu(x)\/,\quad t<t_0,x>x_0 \/.
\end{equation}
\end{theorem}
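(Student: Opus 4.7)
The plan is to route the hypothesised estimates on $p_t(x)$ through Theorem~\ref{thm:qtx:bounds} to obtain matching estimates on $q_t^*(x)$, plug these into the integral representation \eqref{eq:ft:formula} of $f_t(x)$, and recognise the resulting time-factor as a Wiener--Hopf convolution that equals $t$.

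For the lower inequality, suppose $p_t(x)\stackrel{x_0,t_0}{\geq} t\nu(x)$, with only \eqref{eq:nu:monoton} in force. By Remark~\ref{rem:thmconditions}, this half of Theorem~\ref{thm:qtx:bounds} needs just \eqref{eq:nu:monoton}, and so yields $q_t^*(x)\stackrel{x_0,t_0}{\geq}\nu(x)({\tt d^*}+N^*(t))$. Inserting this into \eqref{eq:ft:formula} gives
\begin{equation*}
f_t(x) \;=\; \int_0^t n(t-u<\zeta)\,q_u^*(x)\,du + {\tt d}\,q_t^*(x) \;\geq\; c\,\nu(x)\,I(t)\/,
\end{equation*}
where
\begin{equation*}
I(t) \;:=\; \int_0^t n(t-u<\zeta)\bigl({\tt d^*}+N^*(u)\bigr)du + {\tt d}\bigl({\tt d^*}+N^*(t)\bigr)\/.
\end{equation*}
The upper inequality is a mirror argument invoking the upper-bound half of Theorem~\ref{thm:qtx:bounds}, for which Remark~\ref{rem:thmconditions} requires both \eqref{eq:nu:monoton} and \eqref{eq:nu:double}, matching the hypotheses of the theorem; the comparability \eqref{eq:supremum:comparability} is the conjunction of the two inequalities.

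The proof therefore reduces to $I(t)=t$. Set $U(dt):={\tt d^*}\delta_0(dt)+n^*(t<\zeta)\,dt$, which coincides with $g(dt,[0,\infty))$ by integrating \eqref{2598}, and let $U'(dt):={\tt d}\delta_0(dt)+n(t<\zeta)\,dt$ be the analogous object obtained by applying the constructions of Section~2 to the dual process $X^*=-X$. A change of variables $u=t-s$ identifies $I(t)=(U\ast U')([0,t])$. Now $U$ and $U'$ are the renewal measures of the ladder time subordinators $\tau$ and $\tau^*$, whose Laplace exponents are $\kappa(\cdot,0)$ and $\kappa^*(\cdot,0)$ from \eqref{7342}, so $\int_0^\infty e^{-\alpha t}U(dt)=1/\kappa(\alpha,0)$ and similarly for $U'$. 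With the normalisation of $L^*$ fixed by \eqref{norm1}, the Wiener--Hopf factorisation reads $\kappa(\alpha,0)\kappa^*(\alpha,0)=\alpha$, whence $(U\ast U')(dt)=dt$ and $I(t)=t$. The step that I anticipate will require the most care is pinning down the multiplicative constant in this Wiener--Hopf identity under the paper's normalisation; once that is secured, the remainder of the proof is a direct substitution.
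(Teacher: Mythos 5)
Your proposal is correct and follows the same overall strategy as the paper: transfer the bounds on $p_t(x)$ to bounds on $q_t^*(x)$ via Theorem~\ref{thm:qtx:bounds} (invoking Remark~\ref{rem:thmconditions} to track exactly which of \eqref{eq:nu:monoton}, \eqref{eq:nu:double} each direction needs), plug into the representation \eqref{eq:ft:formula}, and then identify the resulting time-factor as $t$. The only place you diverge is in establishing that
$\int_0^t n(t-u<\zeta)\bigl({\tt d^*}+N^*(u)\bigr)du + {\tt d}\bigl({\tt d^*}+N^*(t)\bigr)=t$.
You prove this through the Wiener--Hopf factorisation $\kappa(\alpha,0)\kappa^*(\alpha,0)=\alpha$ and the identification of $U$, $U'$ as the renewal measures of the ladder-time subordinators, while the paper simply quotes \eqref{eq:n:relation}, an identity already established inside the proof of Proposition~\ref{prop:qt:general} by integrating \eqref{eq:sup:formula} over $[0,\infty)$ in $x$ and over $[0,t]$ in time (using ${\tt d}{\tt d^*}=0$). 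The two derivations are equivalent and both valid under the normalisation \eqref{norm1}; yours makes the factorisation structure and the normalisation constant explicit, whereas the paper's is shorter because the identity was needed earlier regardless. You could streamline your write-up by observing that $I(t)$, after expanding ${\tt d^*}+N^*(u)$, is literally the left-hand side of \eqref{eq:n:relation}, so no Wiener--Hopf argument is required.
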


\begin{remark}
We can replace "for every $x_0,t_0>0$" by "there exists $x_0,t_0>0$ such that" in each statement of the theorem above 
$($simultaneously in the corresponding hypothesis and the conclusion$)$.
\end{remark}

\subsection{Asymptotic results for $x\to \infty$} 
Next we give the results describing the relations between asymptotic behavior 
of $p_t(x)$, $q_t^*(x)$ and $f_t(x)$, when $x\to \infty$. In the first of them we show that there is a correspondence between 
asymptotic behavior of $p_t(x)/\nu(x)$ and $q_t^*(x)/\nu(x)$ as $x\to \infty$ and $t$ is bounded. 

\begin{theorem}
 \label{thm:asympt:x}
Assume that \eqref{eq:nu:monoton}, \eqref{eq:nu:double} hold and there exists $x_0>0$ such that for every  $a>x_0$ we have
	\begin{equation}
	\label{eq:nu:ratio:limit}
	\lim_{x\to \infty} \frac{\nu(x+a)}{\nu(x)}= 1\/.
	\end{equation}
	Let $t_0>0$. If 
		\begin{equation}
	   \label{eq:ptnu:asympt:xstrong}
		  \lim_{x\to \infty} \frac{p_t(x)}{t\nu(x)}=1\/,
	\end{equation}
	uniformly on $(0,t_0]$, then
			\begin{equation}
	   \label{eq:qtnu:asympt:xweak}
		  \lim_{x\to \infty} \frac{q_t^*(x)}{\nu(x)}={\tt d^*}+N^*(t)
	\end{equation}
	uniformly on $(0,t_0]$. Conversely, if  
			\begin{equation}
	   \label{eq:qtnu:asympt:xstrong}
		  \lim_{x\to \infty} \frac{q_t^*(x)}{({\tt d^*}+N^*(t))\nu(x)}=1
	\end{equation}
	uniformly on $(0,t_0]$, then
			\begin{equation}
	   \label{eq:ptnu:asympt:xweak}
		  \lim_{x\to \infty} \frac{p_t(x)}{\nu(x)}=t
	\end{equation}
	uniformly on $(0,t_0]$. Additionally, if we strengthen the condition \eqref{eq:nu:monoton} by assuming that $\nu(x)$ is 
	non-increasing on $(x^*,\infty)$ for some $x^*>0$, then \eqref{eq:ptnu:asympt:xstrong} and \eqref{eq:qtnu:asympt:xstrong} 
	are equivalent.
\end{theorem}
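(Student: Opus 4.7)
Theorem \ref{thm:asympt:x} contains two one-way implications together with an equivalence under an extra monotonicity hypothesis. Both directions rest on the identities \eqref{eq:ACformula} and \eqref{eq:pt:formula:exp} and on the comparability granted by Theorem \ref{thm:qtx:bounds}, which is provided by either \eqref{eq:ptnu:asympt:xstrong} or \eqref{eq:qtnu:asympt:xstrong}.

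For \eqref{eq:ptnu:asympt:xstrong}$\Rightarrow$\eqref{eq:qtnu:asympt:xweak}, I divide \eqref{eq:ACformula} by $x\nu(x)$ to obtain
\[
\frac{q_t^*(x)}{\nu(x)}={\tt d^*}\frac{p_t(x)}{t\nu(x)}+\int_0^t\!du\!\int_0^x \phi_{t,u}(x,z)\,q_u^*(z)\,dz,\qquad \phi_{t,u}(x,z):=\frac{(x-z)p_{t-u}(x-z)}{x(t-u)\nu(x)}.
\]
The leading term tends to ${\tt d^*}$ uniformly in $t\in(0,t_0]$ by the hypothesis. For each fixed $(u,z)$ with $u<t$, $\phi_{t,u}(x,z)\to 1$ as $x\to\infty$ by combining $(x-z)/x\to 1$, \eqref{eq:ptnu:asympt:xstrong}, and \eqref{eq:nu:ratio:limit}; Fatou's lemma then gives $\liminf\frac{q_t^*(x)}{\nu(x)}\geq {\tt d^*}+N^*(t)$. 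For the matching upper bound I split the $z$-integral at $M$ and $x/2$: on $[0,M]$, uniform convergence of $\phi_{t,u}$ to $1$ (invoking \eqref{eq:nu:monoton} to make the limit uniform in $z$) yields $\int_0^t\!du\int_0^M q_u^*(z)\,dz$; on $[M,x/2]$, the hypothesis together with \eqref{eq:nu:monoton}-\eqref{eq:nu:double} bounds $\phi_{t,u}\leq c$, while Proposition \ref{prop:qt:tail} makes $\int_M^\infty q_u^*(z)\,dz$ arbitrarily small for $M$ large, uniformly in $u\leq t_0$; on $[x/2,x]$, the substitution $w=x-z$ together with the comparability $q_u^*(x-w)\leq c\nu(x)({\tt d^*}+N^*(u))$ reduces the contribution to $\frac{c}{x}\int_0^t({\tt d^*}+N^*(u))(t-u)^{-1}\int_0^{x/2}w\,p_{t-u}(w)\,dw\,du$, which is shown to vanish as $x\to\infty$.

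For \eqref{eq:qtnu:asympt:xstrong}$\Rightarrow$\eqref{eq:ptnu:asympt:xweak}, I divide \eqref{eq:pt:formula:exp} by $\nu(x)$. The integrand $q_u^*(x+z)/\nu(x)=[q_u^*(x+z)/\nu(x+z)]\cdot[\nu(x+z)/\nu(x)]$ converges pointwise to ${\tt d^*}+N^*(u)$ and is dominated by $c({\tt d^*}+N^*(u))$ via \eqref{eq:nu:monoton}; this bound is integrable against $q_{t-u}(dz)\,du$ because $\int q_{t-u}(dz)=n(t-u<\zeta)$ and $\int_0^t({\tt d^*}+N^*(u))n(t-u<\zeta)\,du<\infty$. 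Dominated convergence then yields
\[
\lim_{x\to\infty}\frac{p_t(x)}{\nu(x)}=\int_0^t({\tt d^*}+N^*(u))n(t-u<\zeta)\,du+{\tt d}({\tt d^*}+N^*(t)).
\]
To identify the limit as $t$, I take the Laplace transform $\mathcal{L}$ in $t$: the definitions of $\kappa,\kappa^*$ give $\mathcal{L}[{\tt d^*}+N^*](\lambda)=\kappa^*(\lambda,0)/\lambda^2$ and $\mathcal{L}[n(\cdot<\zeta)](\lambda)+{\tt d}=\kappa(\lambda,0)/\lambda$, so that the Laplace transform of the right-hand side equals $\kappa^*(\lambda,0)\kappa(\lambda,0)/\lambda^3=1/\lambda^2=\mathcal{L}[t](\lambda)$ by Fristedt's identity $\kappa(\lambda,0)\kappa^*(\lambda,0)=\lambda$.

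Under the strengthening that $\nu$ is non-increasing on $(x^*,\infty)$, both weak statements can be upgraded to their strong counterparts, giving the equivalence of \eqref{eq:ptnu:asympt:xstrong} and \eqref{eq:qtnu:asympt:xstrong}: monotonicity combined with the continuity of ${\tt d^*}+N^*(t)$ on $(0,t_0]$ suffices to transfer uniformity in $t$ between the weak and strong forms. The main obstacle is the $[x/2,x]$ piece of the forward direction: the naive bound $\int_0^{x/2}w\,p_r(w)\,dw\leq x/2$ produces a logarithmic divergence at $u=t$ in the $s$-integral. The correct approach requires a secondary split of $w$ at a threshold tuned to $(t-u)$, exploiting that for large $w$ the hypothesis provides $p_{t-u}(w)\sim(t-u)\nu(w)$ (yielding a contribution decaying polynomially in $x$) while for small $w$ a scaling-type argument shows that the truncated first moment $\int_0^R w\,p_{t-u}(w)\,dw$ remains bounded uniformly in $u\leq t_0$.
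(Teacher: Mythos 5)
Your overall strategy --- dividing \eqref{eq:ACformula} by $x\nu(x)$, dividing \eqref{eq:pt:formula:exp} by $\nu(x)$, and leaning on Theorem \ref{thm:qtx:bounds} for the a priori comparability $q_u^*(z)\leq c\,({\tt d^*}+N^*(u))\nu(z)$ --- matches the paper exactly, and your Laplace-transform derivation of the identity
$\int_0^t({\tt d^*}+N^*(u))n(t-u<\zeta)\,du+{\tt d}({\tt d^*}+N^*(t))=t$
via Fristedt and $\kappa(\lambda,0)\kappa^*(\lambda,0)=\lambda$ is a legitimate alternative to the paper's route, which obtains the same identity \eqref{eq:n:relation} by integrating the supremum law \eqref{eq:sup:formula} over $[0,t]\times[0,\infty)$.

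However, the central estimate in the forward direction is not closed, and your own diagnostic at the end points at the wrong fix. The obstacle is the piece of the $z$-integral near $x$. You cut at $x/2$, and after the substitution $w=x-z$ you face
\[
\frac{1}{x}\int_0^t\frac{{\tt d^*}+N^*(u)}{t-u}\int_0^{x/2}w\,p_{t-u}(w)\,dw\,du\/.
\]
There is no logarithmic divergence here: after the change of variable $s=t-u$ and bounding $N^*(t-s)\leq N^*(t)$ the problematic factor $1/(t-u)$ is absorbed into $p_s(w)/s$, which is controlled because $\tfrac{p_s(dw)}{s}\,e^{-qs}\,ds$ is the L\'evy measure of $(\tau,X_\tau)$ with $\tau$ an independent exponential; this is precisely \eqref{eq:levymeasure01}, and it is the key fact your sketch is missing. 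But even with that fact the bound you obtain,
\[
\frac{{\tt d^*}+N^*(t)}{x}\int_0^{t_0}\!\int_0^{x/2}\frac{w}{s}\,p_s(w)\,dw\,ds
\;\leq\;\frac{{\tt d^*}+N^*(t)}{x}\Bigl(C+\tfrac{x}{2}\int_0^{t_0}\!\int_1^\infty\frac{p_s(w)}{s}\,dw\,ds\Bigr),
\]
is only $O(1)$, not $o(1)$: the cutoff $x/2$ loses. The paper cuts at $\delta x$ with $\delta\in(1/2,1)$ chosen close to $1$, so the inner integral runs over $[0,(1-\delta)x]$ and the residual piece becomes $(1-\delta)\int_0^{t_0}\int_1^\infty p_s(w)/s\,dw\,ds$, which is made $<\varepsilon$ by first choosing $\delta$ and then $x$ large to kill the $[0,1]$ contribution. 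Your ``secondary split of $w$ tuned to $t-u$'' and the claim that $p_{t-u}(w)\sim(t-u)\nu(w)$ yields polynomial decay do not remove this obstruction; the hypothesis gives control of $p_{t-u}(w)/((t-u)\nu(w))$ only for $w$ large, and the dominant error comes exactly from $w$ comparable to $x$, which the $\delta$-cut handles cleanly.

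Two smaller issues. First, your dominated-convergence step in the converse direction as written gives only pointwise-in-$t$ convergence; uniformity on $(0,t_0]$ requires majorizing the integral over $[0,t]$ by the corresponding $t$-free integral over $[0,t_0]$, using the monotonicity of $N^*$, as the paper does for $I_2(t,x)$. Second, the upgrade to the strong equivalence under monotonicity of $\nu$ is not a soft consequence of ``continuity of ${\tt d^*}+N^*(t)$'': when ${\tt d^*}=0$ the normalizer $N^*(t)\to0$ as $t\to0^+$, and the paper has to re-estimate the $\nu(x-z)/\nu(x)$ error term by splitting the $z$-range at a fixed cutoff and invoking monotonicity of $\nu$ together with the L\'evy tail to obtain a bound proportional to $N^*(t)$ itself. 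That argument is not contained in your proposal.
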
 

\begin{remark}
   \label{rem:dstarpositive:xinfty}
   Note that \eqref{eq:ptnu:asympt:xstrong} and \eqref{eq:qtnu:asympt:xstrong} are equivalent in an obvious way if ${\tt d^*}>0$, 
   since then ${\tt d^*}+N^*(t)\sim {\tt d^*}$.
\end{remark}

The results of Theorem \ref{thm:asympt:x} can be applied to study the asymptotics of the supremum density at infinity, as it is 
done in the following theorem.

\medskip

\begin{theorem}
  \label{thm:ftx:xinfty}
	Assume that \eqref{eq:nu:monoton}, \eqref{eq:nu:double} and \eqref{eq:nu:ratio:limit} hold and let $t_0>0$. If 
		\begin{equation}
	   \label{eq:ptnu:asympt:strong2}
		  \lim_{x\to \infty} \frac{p_t(x)}{t\nu(x)}=1\/,
	\end{equation}
	uniformly on $(0,t_0]$, then
			\begin{equation}
	   \label{eq:ftnu:asympt:weak}
		  \lim_{x\to \infty} \frac{f_t(x)}{\nu(x)}=t
	\end{equation}
	uniformly on $(0,t_0]$. Moreover, if $\nu(x)$ is non-increasing on $(x^*,\infty)$ for some $x_0>0$, then
				\begin{equation}
	   \label{eq:ftnu:asympt:strong}
		  \lim_{x\to \infty} \frac{f_t(x)}{t\nu(x)}=1
	\end{equation}
	uniformly on $(0,t_0]$.
\end{theorem}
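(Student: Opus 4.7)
The plan is to combine the representation $f_t(x)=\int_0^t n(t-u<\zeta)q_u^*(x)\,du+{\tt d}q_t^*(x)$ from \eqref{eq:ft:formula} with the sharp asymptotics of the entrance law supplied by Theorem~\ref{thm:asympt:x}, and then to identify the resulting limit as $t$ via the Wiener--Hopf factorization. Under hypotheses \eqref{eq:nu:monoton}, \eqref{eq:nu:double}, \eqref{eq:nu:ratio:limit} and \eqref{eq:ptnu:asympt:strong2}, Theorem~\ref{thm:asympt:x} yields $q_u^*(x)/\nu(x)\to {\tt d^*}+N^*(u)$ as $x\to\infty$, uniformly in $u\in(0,t_0]$. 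Writing $q_u^*(x)/\nu(x)={\tt d^*}+N^*(u)+\rho(x,u)$ with $\eta(x):=\sup_{u\in(0,t_0]}|\rho(x,u)|\to 0$ and dividing \eqref{eq:ft:formula} by $\nu(x)$ gives
\[
\left|\frac{f_t(x)}{\nu(x)}-\Phi(t)\right|\le \eta(x)\!\left[\int_0^{t_0}n(s<\zeta)\,ds+{\tt d}\right]\/,
\]
where $\Phi(t):=\int_0^t n(t-u<\zeta)({\tt d^*}+N^*(u))\,du+{\tt d}({\tt d^*}+N^*(t))$. The bracket is finite because $n(s<\zeta)$ coincides with the tail $\overline{\Pi}_\tau(s)$ of the L\'evy measure of the ladder time subordinator, so $\int_0^{t_0}n(s<\zeta)\,ds=\int_{(0,\infty)}(u\wedge t_0)\,\Pi_\tau(du)<\infty$.

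The next step is to identify $\Phi(t)=t$. Using the standard L\'evy--Khintchine identities for the ladder subordinators, under the normalization~\eqref{norm1} one has
\[
\int_0^\infty e^{-qs}n(s<\zeta)\,ds=\frac{\kappa(q,0)}{q}-{\tt d}\qquad\text{and}\qquad\int_0^\infty e^{-qu}({\tt d^*}+N^*(u))\,du=\frac{\kappa^*(q,0)}{q^2}\/.
\]
Fubini (the first term in $\Phi$ is a convolution, the second adds ${\tt d}\,\kappa^*(q,0)/q^2$) together with the Wiener--Hopf factorization $\kappa(q,0)\kappa^*(q,0)=q$ then yields $\int_0^\infty e^{-qt}\Phi(t)\,dt=\kappa(q,0)\kappa^*(q,0)/q^3=1/q^2$, hence $\Phi(t)=t$. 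Combined with the previous bound this proves the first conclusion $f_t(x)/\nu(x)\to t$, uniformly on $(0,t_0]$.

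For the ``moreover'' part, the extra monotonicity of $\nu$ allows the last statement of Theorem~\ref{thm:asympt:x} to upgrade the entrance-law asymptotics to the \emph{multiplicative} form $q_u^*(x)/\nu(x)=({\tt d^*}+N^*(u))(1+r(x,u))$ with $\eta(x):=\sup_{u\in(0,t_0]}|r(x,u)|\to 0$. Repeating the same computation now produces the relative estimate
\[
\left|\frac{f_t(x)}{\nu(x)}-\Phi(t)\right|\le \eta(x)\,\Phi(t)=\eta(x)\,t\/,
\]
so that $|f_t(x)/(t\nu(x))-1|\le \eta(x)$ uniformly in $t\in(0,t_0]$. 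The main obstacle I anticipate is the identification $\Phi(t)=t$: it relies on the Wiener--Hopf identity $\kappa(q,0)\kappa^*(q,0)=q$, whose validity hinges on the calibration of the local times specified in \eqref{norm1}--\eqref{norm2}, and the Laplace manipulations require some care when $n$ or $n^*$ is infinite, i.e.\ when both half-lines are regular.
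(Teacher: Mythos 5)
Your proof is correct and follows the same overall structure as the paper's: decompose $f_t(x)/\nu(x)$ via \eqref{eq:ft:formula}, feed in the uniform entrance-law asymptotics from Theorem~\ref{thm:asympt:x} (additive error for the first conclusion, multiplicative error under monotonicity of $\nu$ for the second), and then recognize the limiting expression $\Phi(t)=\int_0^t n(t-u<\zeta)({\tt d^*}+N^*(u))\,du+{\tt d}({\tt d^*}+N^*(t))$ as being equal to $t$. The only genuine divergence is in how $\Phi(t)=t$ is established. The paper treats this identity as already proved: after expanding $\Phi(t)={\tt d^*}N(t)+\int_0^t n(t-u<\zeta)N^*(u)\,du+{\tt d}N^*(t)$ (using ${\tt d}{\tt d^*}=0$) it is exactly \eqref{eq:n:relation}, which was derived in the proof of Proposition~\ref{prop:qt:general} by integrating the supremum law \eqref{eq:sup:formula} over $[0,t]$ in time. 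You instead re-derive the identity from scratch via Laplace transforms and the Wiener--Hopf factorization $\kappa(q,0)\kappa^*(q,0)=q$. Your computation is correct: under the normalization \eqref{norm1} Fristedt's formula \eqref{fris} indeed gives $\kappa(q,0)\kappa^*(q,0)=\exp\!\big(\int_0^\infty (e^{-t}-e^{-qt})t^{-1}dt\big)=q$ by Frullani, and the two Laplace-transform identities you invoke for $n(s<\zeta)$ and ${\tt d^*}+N^*(u)$ are standard for the tail of the ladder-time L\'evy measure and hold for any (possibly killed) subordinator. The paper's route has the advantage of being elementary and of reusing the already-established \eqref{eq:n:relation}, whereas yours is self-contained and makes the Wiener--Hopf origin of the identity explicit; either is acceptable. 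Your concern about infinite $n,n^*$ is unfounded: $n(s<\zeta)$ is always finite and locally integrable for $s>0$ (it dominates the tail of a L\'evy measure plus the killing rate), which is exactly why $N(t_0)<\infty$, so no extra care is needed.
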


\section{Examples and applications}
\label{sec:examples}

In this section we present various examples in order to illustrate the wide range of possible applications of the above results. 

\subsection{Stable processes} The first example regards stable processes. It is easy to check that all the assumptions of the 
above-given results hold in this case. One can use the asymptotic results from \cite{bib:zol86} to verify the required uniform 
convergence of $p_t(x)/t$ and $p_t(x)/(t\nu(x))$ as $t$ goes to $0$ and $x\to \infty$ respectively. In particular we can recover 
results of \cite{bib:ds10} and \cite{ku1}, \cite{ku2}.

\subsection{Brownian motion with drift subordinated} In the next example we present a L\'evy process whose density of the L\'evy 
measure fulfills nice properties on the positive half-line (so that all assumptions of our results hold) but with less regularity on the 
negative half line. More precisely, we consider a Brownian motion with positive drift $Y_t=B_t+at$, $a>0$, subordinated by the 
independent $1/2$-stable subordinator $T^{(1/2)}$, i.e. the process $X_t = Y_{T_t}$. Then the transition density function $p_t(x)$ 
of $X_t$ is given by
\begin{eqnarray*}
   p_t(x) &=& \int_{0}^\infty \dfrac{1}{\sqrt{2\pi s}}e^{-\frac{(x-as)^2}{2s}}\eta_{t}^{(1/2)}(s)ds\/,\quad \eta_t^{(1/2)}(s) = 
   \frac{t}{\sqrt{2\pi}}\frac{1}{s^{3/2}}e^{-\frac{t^2}{2s}}\/.
\end{eqnarray*}
Using the integral representation 
\begin{equation*}
   K_\nu(z) = 2^{-\nu-1}z^\nu\int_0^\infty e^{-u}e^{-\frac{z^2}{4u}}u^{-\nu-1}du
\end{equation*}
of the modified Bessel function (see for example 8.432 (6) in \cite{bib:gr00}) we obtain
\begin{equation*}
   p_t(x) = \frac{te^{ax}}{2\pi}\int_0^\infty e^{-\frac{x^2+t^2}{2s}}e^{-\frac{a^2s}{2}}\frac{ds}{s^2} = \frac{ate^{ax}}{\pi}
   \frac{K_1(a\sqrt{x^2+t^2})}{\sqrt{x^2+t^2}}\/.
\end{equation*}
It is now straightforward that
\begin{equation*}
   \frac{p_t(x)}{t} \to \nu(x) = \frac{a\,e^{ax}}{\pi}\frac{K_1(a|x|)}{|x|}\/,
\end{equation*} 
uniformly on $\{|x|>\varepsilon\}$ for every given $\varepsilon>0$. One can easily show that using the asymptotic behavior 
$K_\nu(z)\cong \sqrt{\frac{\pi}{2z}}e^{-z}$, $z\to \infty$, together with $(zK_1(z))'=-zK_0(z)$, we obtain that $\nu(x)$ is 
non-increasing function on the positive half-line, it is regularly varying at $\infty$ with index $1+1/2$ and all the regularity conditions 
required in theorems given in the previous sections hold. However we stress the fact that $\nu(x)$ decays exponentially on the 
negative half-line. Thus, in particular, the condition \eqref{eq:nu:double} does not hold, but since we do not require any regularity 
on the negative half-line, we can apply our results here. 

Moreover, this example can be extended by considering general $\alpha/2$-stable subordinator $T^{(\alpha/2)}$ and the subordinated 
process $X_t = Y_{T_t^{(\alpha/2)}}$, with $Y_t= B_t+at$ as previously. Although the corresponding transition density function 
$\eta_t^{(\alpha/2)}(u)$ does not have such simple representation as in the $1/2$-stable case, using the scaling property 
$\eta_t^{(\alpha/2)}(u)=t^{-2/\alpha}\eta_1^{(\alpha/2)}(ut^{-2/\alpha})$ and the series representation (see formula 2.4.8 on page 90 in \cite{bib:zol86})
\begin{equation*}
    \eta_1^{(\alpha/2)}(u) = \frac{1}{\pi} \sum_{n=1}^\infty (-1)^{n-1}\frac{\Gamma(n\alpha/2+1)}{\Gamma(n+1)}\frac{\sin(\pi n\alpha/2)}
    {u^{1+n\alpha/2}}\/,\quad u>0\/,
\end{equation*}
one can show that 
\begin{equation*}
   \frac{p_t(x)}{t} \to \nu(x) = \dfrac{a^{\frac{1+\alpha}{2}}e^{ax}}{\pi}\frac{K_{\frac{1+\alpha}{2}}(a|x|)}{|x|^{\frac{1+\alpha}{2}}},
\end{equation*}
as $t\to 0$, uniformly on $\{|x|>\varepsilon\}$. Moreover, it can be also shown that $p_t(x)/(t\nu(x))$ tends to $1$, as $x\to \infty$, 
uniformly for small $t$. Regularity of the above-given L\'evy measure on positive half-line combined with our results give
\begin{corollary}
  Let $X_t = Y_{T_t^{(\alpha/2)}}$, where $T^{(\alpha/2)}$ is $\alpha/2$-stable subordinator, $\alpha\in(0,2)$, independent from a 
  Brownian motion with positive drift $Y_t=B_t+at$, $a>0$. Then for given $t_0,x_0>0$ we have
	\begin{equation}
   \lim_{t\to0^+}\frac{f_t(x)}{t}=\nu(x) = \dfrac{a^{\frac{1+\alpha}{2}}e^{ax}}{\pi}\frac{K_{\frac{1+\alpha}{2}}(a|x|)}{|x|^{\frac{1+\alpha}{2}}},
\end{equation}
uniformly on $[x_0,\infty)$ and
\begin{equation}
   \lim_{x\to \infty}\frac{f_t(x)}{t\nu(x)} = 1,
\end{equation}
uniformly on $(0,t_0]$. In particular
\begin{equation*}
   f_t(x)\stackrel{x_0,t_0}{\approx}  \dfrac{ta^{\frac{1+\alpha}{2}}e^{ax}}{\pi}\frac{K_{\frac{1+\alpha}{2}}(a|x|)}
   {|x|^{\frac{1+\alpha}{2}}}\/,\quad x\geq x_0\/,t\leq t_0\/.
\end{equation*}
\end{corollary}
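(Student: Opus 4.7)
The plan is to verify the hypotheses of Theorems \ref{thm:ftx:tzero} and \ref{thm:ftx:xinfty} (and implicitly Theorem \ref{thm:supremum:bounds} for the final comparability statement) for the subordinated process $X_t=Y_{T_t^{(\alpha/2)}}$, and then invoke those theorems with the explicit $\nu(x)$ computed in the example.

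First, I would collect the two uniform convergences of $p_t(x)$ already advertised in the body of the example, namely $p_t(x)/t\to \nu(x)$ uniformly on $\{|x|>\varepsilon\}$ as $t\to0^+$, and $p_t(x)/(t\nu(x))\to 1$ uniformly on $(0,t_0]$ as $x\to\infty$. Both are obtained by inserting the series representation of $\eta_1^{(\alpha/2)}(u)$ together with the Bessel integral $K_\nu(z)=2^{-\nu-1}z^\nu\int_0^\infty e^{-u-z^2/(4u)}u^{-\nu-1}\,du$ into the definition of $p_t(x)$, and splitting off the leading $n=1$ term of the series: after the change of variable $u=t^2/(2s)$ and dominated convergence, the leading term reproduces the claimed $\nu(x)$, while the remaining terms carry an extra factor $t^{\alpha}$ (or higher), uniformly in $|x|\geq \varepsilon$, which also gives the second uniformity once one uses the Bessel asymptotics to track the decay in $x$.

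Second, I would verify the regularity hypotheses on $\nu$ that feed into Theorems \ref{thm:ftx:tzero} and \ref{thm:ftx:xinfty}. Using $K_\nu(z)\sim\sqrt{\pi/(2z)}\,e^{-z}$ as $z\to\infty$ together with the identity $(zK_1(z))'=-zK_0(z)$ and its analogues for the derivative of $z^{\nu}K_\nu(z)$, one obtains
\[
\nu(x)\,\sim\,\frac{a^{\alpha/2}}{\sqrt{2\pi}}\,x^{-1-\alpha/2}\,,\qquad x\to+\infty\,,
\]
so that $\nu(x)$ is regularly varying at $+\infty$ with index $-1-\alpha/2$, continuous and tending to $0$ at $\infty$, hence uniformly continuous on $(x_1,\infty)$ for any $x_1>0$. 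Regular variation then implies \eqref{eq:nu:monoton}, \eqref{eq:nu:double} and \eqref{eq:nu:ratio:limit} on the positive half-line (no assumption is needed on the negative half-line for the theorems applied here, since those concern only $x>0$). Moreover, differentiating the explicit Bessel expression and using the above asymptotics shows that $\nu(x)$ is eventually non-increasing on some $(x^*,\infty)$.

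With these ingredients, Theorem \ref{thm:ftx:tzero} applied to $p_t(x)/t\to\nu(x)$ gives the first limit of the corollary, Theorem \ref{thm:ftx:xinfty} applied in the non-increasing regime gives the second (via \eqref{eq:ftnu:asympt:strong}), and Theorem \ref{thm:supremum:bounds} provides the two-sided comparability $f_t(x)\stackrel{x_0,t_0}{\approx}t\nu(x)$ that yields the final displayed estimate. The only genuine obstacle is the first step, namely producing the uniform convergences of $p_t(x)$ in the two relevant regimes; everything afterwards is a direct verification of hypotheses and an application of already-proved theorems, plus the cosmetic step of rewriting the asymptotic constants in the closed Bessel form displayed in the statement.
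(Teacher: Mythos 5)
Your proposal is correct and follows exactly the route the paper intends: establish the two uniform convergences of $p_t(x)$ from the series representation of $\eta_t^{(\alpha/2)}$, verify the regularity hypotheses on $\nu$ on $(0,\infty)$ (uniform continuity, \eqref{eq:nu:monoton}, \eqref{eq:nu:double}, \eqref{eq:nu:ratio:limit}, eventual monotonicity), and apply Theorems \ref{thm:ftx:tzero}, \ref{thm:ftx:xinfty} and \ref{thm:supremum:bounds}. Two harmless imprecisions: the substitution $u=t^2/(2s)$ belongs to the $\alpha=1$ (i.e.\ $1/2$-stable subordinator) case, the general scaling being $s\mapsto st^{-2/\alpha}$, and the $n$-th term of the $\eta_1^{(\alpha/2)}$ series contributes an extra factor $t^{n-1}$ after dividing by $t$ (not $t^\alpha$), but this is still $o(1)$ uniformly, so the conclusion stands.
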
 

\subsection{Subordinator with negative drift} In order to give an example where $(0,\infty)$ is not regular (i.e. ${\tt d^*}>0$) and 
to which our results apply, let us consider the process $X_t = T^{(1/2)}_t-at$, $a>0$, where $T^{(1/2)}$ is an $1/2$-stable 
subordinator. Obviously this example can easily be extended to general $\alpha/2$-stable subordinator with negative drift, as it 
was done in the example given above. The details are left to the reader. Note that the corresponding transition density function 
is given by
\begin{equation*}
   p_t(x) = \dfrac{t}{\sqrt{2\pi}}\dfrac{1}{(x+at)^{3/2}}e^{-\frac{t^2}{2(x+at)}}\/,\quad x>0\/.
\end{equation*}
It is an easy exercise to check that $p_t(x)/t$ tends to $\nu(x)=x^{-3/2}/\sqrt{2\pi}$, as $t\to 0$, uniformly in $x>\varepsilon$, 
for every fixed $\varepsilon>0$ and $p_t(x)/(t\nu(x))$ tends to 1, as $x\to \infty$, uniformly for small $t$. Consequently all the 
results can be used to study asymptotics and estimates of the supremum density $f_t(x)$.

\subsection{Unimodal L\'evy process} We consider symmetric unimodal L\'evy processes with regularly varying 
L\'evy-Kchintchin exponent. Unimodality on a real line means that the density $p_t(\cdot)$ of $X_t$ under 
$\p$ exists on $\mathbb{R}\setminus\{0\}$ and that $p_t(\cdot)$ is symmetric and non-increasing on $(0,\infty)$. As it was 
shown in \cite{bib:w83}, this is equivalent to assume that the corresponding L\'evy measure has a symmetric density which is 
non-increasing on $(0,\infty)$. Using Theorem 7 and Corollary 2 from \cite{bib:cgt2017} combined together with Theorems  
\ref{thm:ftx:xinfty} and \ref{thm:ftx:tzero} above give the following.
\begin{theorem}
\label{thm:unimodal}
Assume that $(X,\p)$ is a symmetric unimodal L\'evy process whose L\'evy-Kchintchin exponent is regularly varying at zero, 
with index $\alpha\in(0,2)$. Then for any $t_0,x_0>0$,
\begin{equation}
   \lim_{x\to \infty}\frac{f_t(x)}{t\nu(x)} = 1,
\end{equation}
uniformly on $(0,t_0]$ and
\begin{equation}
   \lim_{t\to0^+}\frac{f_t(x)}{t}=\nu(x),
\end{equation}
uniformly on $[x_0,\infty)$. In particular
\begin{equation*}
   f_t(x)\stackrel{x_0,t_0}{\approx} t\nu(x)\/,\quad x\geq x_0\/,t\leq t_0\/.
\end{equation*}
\end{theorem}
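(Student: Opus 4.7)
The plan is to derive Theorem \ref{thm:unimodal} by reducing the statement about $f_t(x)$ to the analogous statement about $p_t(x)$ via Theorems \ref{thm:ftx:xinfty} and \ref{thm:ftx:tzero}, and to obtain the latter from the results of \cite{bib:cgt2017}. The structure is therefore: first harvest from \cite{bib:cgt2017} the desired asymptotics of $p_t(x)$ together with the structural properties of the L\'evy density $\nu$; second verify that these structural properties imply the hypotheses \eqref{eq:nu:monoton}, \eqref{eq:nu:double}, \eqref{eq:nu:ratio:limit} and the uniform continuity required by the two theorems; third apply the theorems.

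In more detail, I would appeal to Theorem 7 and Corollary 2 of \cite{bib:cgt2017}, which, for a symmetric unimodal L\'evy process whose L\'evy--Khintchine exponent is regularly varying at $0$ with index $\alpha\in(0,2)$, yield two facts. On the one hand the L\'evy density $\nu(x)$ exists, is non-increasing on $(0,\infty)$, and is regularly varying at infinity with index $-(\alpha+1)$. On the other hand one has the sharp two-sided asymptotic
\[
\lim_{x\to\infty}\frac{p_t(x)}{t\nu(x)}=1\quad\text{uniformly on }(0,t_0],
\]
for every fixed $t_0>0$; reading this limit in the other coordinate also gives the uniform convergence $p_t(x)/t\to\nu(x)$ as $t\to 0^+$ for $x\ge x_0$, which is exactly the hypothesis of Theorem \ref{thm:ftx:tzero}.

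Next I would check the regularity conditions on $\nu$. Monotonicity on $(0,\infty)$ gives \eqref{eq:nu:monoton} with constant $1$ for every $r>0$. Regular variation at infinity with a negative index immediately yields both the doubling condition \eqref{eq:nu:double} (with $c(r)\to 2^{\alpha+1}$) and the ratio condition \eqref{eq:nu:ratio:limit}. Uniform continuity of $\nu$ on $(x_1,\infty)$ follows from the combination of continuity (a consequence of the smoothness that \cite{bib:cgt2017} extracts from the regular variation) and the fact that a continuous, non-increasing function tending to $0$ at infinity is automatically uniformly continuous on such a half-line: outside a compact one controls $|\nu(x)-\nu(y)|\le\nu(M)$ and inside a compact one uses Heine--Cantor. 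Finally hypothesis $(H)$ is satisfied because unimodality provides a density for $p_t$, and $\nu$ is absolutely continuous by what precedes.

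With all hypotheses in place, Theorem \ref{thm:ftx:tzero} immediately gives $\lim_{t\to 0^+}f_t(x)/t=\nu(x)$ uniformly on $[x_0,\infty)$. Since $\nu$ is globally non-increasing on $(0,\infty)$, the final clause of Theorem \ref{thm:ftx:xinfty} applies and yields $\lim_{x\to\infty}f_t(x)/(t\nu(x))=1$ uniformly on $(0,t_0]$; the comparability $f_t(x)\stackrel{x_0,t_0}{\approx}t\nu(x)$ follows. The main obstacle is purely bookkeeping: one must make sure that the uniformity in $t\in(0,t_0]$ in the limit coming out of \cite{bib:cgt2017} is genuinely present (not just pointwise in $t$), because this uniformity is precisely what Theorem \ref{thm:ftx:xinfty} requires and what propagates from $p_t$ to $f_t$; this is the one delicate point to extract cleanly from the quoted reference, and everything else in the argument is a direct verification.
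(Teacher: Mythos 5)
Your argument matches the paper's: the paper proves Theorem \ref{thm:unimodal} exactly by citing Theorem 7 and Corollary 2 of \cite{bib:cgt2017} to supply the uniform asymptotics of $p_t(x)$ and the regularity of $\nu$ (non-increasing, regularly varying at infinity), and then invoking Theorems \ref{thm:ftx:xinfty} and \ref{thm:ftx:tzero}, which is precisely your reduction. You have simply spelled out the verification of hypotheses \eqref{eq:nu:monoton}, \eqref{eq:nu:double}, \eqref{eq:nu:ratio:limit}, uniform continuity and $(H)$, which the paper leaves implicit.
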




\section{Proofs}\label{proofs}



This section is organized as follows. We begin with proving the auxiliary results, i.e. Theorem \ref{thm:qtpt:equalities},  Lemma 
\ref{lem:conv} introduced below and then Propositions \ref{prop:qt:tail} and \ref{prop:qt:general}. Then we give proofs of Theorems \ref{thm:asympt:t}-\ref{thm:unimodal}.

\subsection{Proofs of auxiliary results}\label{subsec:Aux}

\begin{proof}[Proof of Theorem $\ref{thm:qtpt:equalities}$] First recall the so-called Fristedt identity which relates the Laplace exponent 
$\kappa(\alpha,\beta)$ of $(\tau,H)$ 
to the law of $(X,\p_x)$, see Theorem 6.16 in \cite{ky}. For all $\alpha\ge0$ and $\beta\ge0$, this exponent 
is given by:
\begin{equation}\label{fris}
\kappa(\alpha,\beta)=\exp\left(\int_0^\infty dt\int_{0}^\infty(e^{-t}-e^{-\alpha t-\beta x})t^{-1}\,\p(X_t\in dx)\right)\,.
\end{equation}
(Note that the constant $k$ which appears in Theorem 6.16 of \cite{ky} is equal to $1$, according to our normalization (\ref{norm1})).
Then recall the definition (\ref{7342}) of $\kappa(\alpha,\beta)$. This expression is differentiable, in $\alpha>0$, in $\beta>0$ and 
in $u>0$. 

Differentiating both sides in $\beta$, we obtain:
\begin{equation}\label{6621}
\e(H_ue^{-\alpha\tau_u-\beta H_u})=u\,\e(e^{-\alpha\tau_u-\beta H_u})\frac{\partial}{\partial \beta}\kappa(\alpha,\beta)\,.
\end{equation}
Then since
\begin{eqnarray*}
\frac{\partial}{\partial \beta}\int_0^\infty dt\int_{0}^\infty(e^{-t}-e^{-\alpha t-\beta x})t^{-1}\,\p(X_t\in dx)&=&
\int_0^\infty dt\int_{0}^\infty e^{-\alpha t-\beta x}xt^{-1}\,\p(X_t\in dx)\\
&=&\int_0^\infty e^{-\alpha t}\e\left(X_te^{-\beta X_t}\ind_{\{X_t\ge0\}}\right)t^{-1}dt\,,
\end{eqnarray*}
we derive from (\ref{fris}) and (\ref{6621}) that,
\begin{eqnarray*}
\e(H_ue^{-\alpha\tau_u-\beta H_u})&=&-u\int_0^\infty e^{-\alpha t}\e\left(X_te^{-\beta X_t}\ind_{\{X_t\ge0\}}\right)t^{-1}dt
\frac{\partial}{\partial u}\e(e^{-\alpha\tau_u-\beta H_u})\\
&=&-u\frac{\partial}{\partial u}\int_0^\infty\e\left(\e(e^{-\alpha\tau_u-\beta H_u})t^{-1}e^{-\alpha t}X_t
e^{-\beta X_t}\ind_{\{X_t\ge0\}}\right)dt\,.
\end{eqnarray*}
Let $\tilde{X}$ be a copy of $X$ which is independent of $(\tau_u,H_u)$. Then the above expression may be written as:
\[\e(H_ue^{-\alpha\tau_u-\beta H_u})
=-u\frac{\partial}{\partial u}\e\left(\int_0^\infty \exp(-\alpha(t+\tau_u)-\beta(\tilde{X}_t+H_u))\ind_{\{\tilde{X}_t\ge0\}}
t^{-1}\tilde{X}_t\,dt\right)\,.\]
For $\tilde{X}$, we may take for instance
$\tilde{X}=(X_{\tau_u+t}-X_{\tau_u},\,t\ge0)$, so that it follows from a change of variables,
\begin{eqnarray*}
&&\e(H_ue^{-\alpha\tau_u-\beta H_u})\\
&=&-u\frac{\partial}{\partial u}\e\left(\int_0^\infty \exp(-\alpha(t+\tau_u)-\beta X_{\tau_u+t})
\ind_{\{X_{\tau_u+t}\ge H_u\}}t^{-1}(X_{\tau_u+t}-H_u)\,dt\right)\\
&=&-u\frac{\partial}{\partial u}\e\left(\int_0^\infty \exp(-\alpha t-\beta X_{t})\ind_{\{X_{t}\ge H_u,\,\tau_u\le t\}}
(t-\tau_u)^{-1}(X_{t}-H_u)\,dt\right)\,,
\end{eqnarray*}
from which we deduce through an integration by part that 
\begin{eqnarray*}
&&\int_0^\infty \e(H_ue^{-\alpha\tau_u-\beta H_u})\,du\\
&=&\int_0^\infty \e\left(\int_0^\infty \exp(-\alpha t-\beta X_{t})\ind_{\{X_{t}\ge H_u,\,\tau_u\le t\}}
(t-\tau_u)^{-1}(X_{t}-H_u)\,dt\right)\,du\,.
\end{eqnarray*}
Applying the strong Markov property at time $\tau_u$, we obtain
\begin{eqnarray*}
&&\int_0^\infty \e\left(\int_0^\infty \exp(-\alpha t-\beta X_{t})\ind_{\{X_{t}\ge H_u,\,\tau_u\le t\}}
(t-\tau_u)^{-1}(X_{t}-H_u)\,dt\right)\,du\\
&=&\int_0^\infty \e\left(\int_0^\infty \e_{H_u}(e^{-\alpha t-\beta X_{t-s}}\ind_{\{X_{t-s}\ge z\}}(X_{t-s}-z))_{z=H_u,s=\tau_u}\ind_{\{\tau_u\le t\}}
(t-\tau_u)^{-1}\,dt\right)\,du\,,\\
&=&\int_{t=0}^\infty\int_{x=0}^\infty e^{-\alpha t-\beta x}\int_{s=0}^t\int_{z=0}^x\frac{x-z}{t-s}\p_z(X_{t-s}\in dx)g(ds,dz)\,dt
\end{eqnarray*}
and (\ref{2294}) follows by inverting the Laplace transforms.

Then (\ref{2681}) is obtained in the same way by differentiating  (\ref{fris}) with respect to $\alpha$.
\end{proof}


\noindent Next we state and prove the following result, which will be used later in the proof of Theorem \ref{thm:qtx:bounds}.

\begin{lemma}
\label{lem:conv}
 Let $X$ be any real L\'evy process which is not a compound Poisson process and such that $|X|$ is not a subordinator. Then
\begin{equation}
   \label{eq:conv1}
   \lim_{x\to 0^+} \frac{\p_x(\tau_0^->t)}{h^*(x)} = n^*(t<\zeta)\/,\quad 
\end{equation}
and
\begin{equation}
  \label{eq:conv2}
 \limsup_{x\to 0^+}\frac{1}{h^*(x)}\int_0^t \p_x(\tau_0^->s)ds \leq \left(1+e\right) \left({\tt d^*}+N^*(t)\right)\/.
\end{equation}
\end{lemma}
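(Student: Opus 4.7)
The plan is to reduce both parts to a single Laplace-transform asymptotic as $x\to 0^+$ and then to exploit the monotonicity of $s\mapsto n^*(s<\zeta)$ and of $s\mapsto\p_x(\tau_0^->s)$. Starting from
\[\int_0^\infty e^{-qs}\p_x(\tau_0^->s)\,ds=\frac{1-\e_x[e^{-q\tau_0^-}]}{q},\qquad q>0,\]
and using that $\tau^*$ is a subordinator with drift ${\tt d^*}$ and L\'evy measure $n^*(\zeta\in\cdot)$ (absorbing a possible killing atom at $\zeta=\infty$), an integration by parts yields
\[\kappa^*(q,0)-q{\tt d^*}=q\int_0^\infty e^{-qs}n^*(s<\zeta)\,ds.\]
The central technical step is then
\[(\star)\qquad\frac{1-\e_x[e^{-q\tau_0^-}]}{h^*(x)}\;\xrightarrow[x\to 0^+]{}\;\kappa^*(q,0)-q{\tt d^*},\qquad q>0,\]
which I would derive either from the Pecherskii--Rogozin factorisation combined with a Tauberian inversion using the behaviour of $h^*(x)$ near $0$, or by appealing to the entrance-law approximations for the killed process collected in Ch.~VI of \cite{bib:b96}. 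Granted $(\star)$, identity \eqref{eq:conv1} follows by Laplace inversion: both $s\mapsto\p_x(\tau_0^->s)/h^*(x)$ and $s\mapsto n^*(s<\zeta)$ are right-continuous and non-increasing, and $(\star)$ gives convergence of their Laplace transforms; the continuity theorem for Laplace transforms then produces pointwise convergence at every $t>0$, using that the law of $\zeta$ under $n^*$ has no atom at a fixed $t$ since $X$ is not a compound Poisson process.

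For \eqref{eq:conv2} I would first use the elementary inequality $\ind_{[0,t]}(s)\leq e\cdot e^{-s/t}$, valid on $[0,\infty)$ because $e^{s/t}\leq e$ on $[0,t]$. This gives
\[\int_0^t\p_x(\tau_0^->s)\,ds\;\leq\;e\int_0^\infty e^{-s/t}\p_x(\tau_0^->s)\,ds=et\bigl(1-\e_x[e^{-\tau_0^-/t}]\bigr).\]
Dividing by $h^*(x)$, taking $\limsup$ and invoking $(\star)$ with $q=1/t$ yields
\[\limsup_{x\to 0^+}\frac{1}{h^*(x)}\int_0^t\p_x(\tau_0^->s)\,ds\;\leq\;e\int_0^\infty e^{-s/t}n^*(s<\zeta)\,ds.\]
Splitting the integral at $s=t$: on $[0,t]$ one has $e^{-s/t}\leq 1$ so the part is at most $N^*(t)$; on $[t,\infty)$ one uses the monotonicity of $s\mapsto n^*(s<\zeta)$ to bound the part by $n^*(t<\zeta)\cdot t/e\leq N^*(t)/e$ (the last inequality $t\,n^*(t<\zeta)\leq N^*(t)$ follows from the same monotonicity). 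The right-hand side is therefore at most $e(1+1/e)N^*(t)=(1+e)N^*(t)\leq(1+e)({\tt d^*}+N^*(t))$, as required.

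The main obstacle is the derivation of $(\star)$. Extracting a pointwise asymptotic as $x\to 0^+$ from an integral (Laplace-transform-in-$x$) identity demands a Tauberian argument, and the analysis has to handle simultaneously the regular and irregular cases for $(-\infty,0)$ (which govern the behaviour of $h^*$ near $0$, including whether $h^*(0+)=0$ or is strictly positive) and for $(0,\infty)$ (whose irregularity is precisely what produces the $-q{\tt d^*}$ correction in $(\star)$). Isolating the atomic part of the downward ladder renewal measure and matching the resulting constant to ${\tt d^*}$ will be the most delicate bookkeeping.
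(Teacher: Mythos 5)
Your key formula $(\star)$ is off by $q{\tt d^*}$, and that error is precisely where the drift hides. The identity the paper relies on — formula (4.4) of \cite{bib:chm16} — reads
\[
\int_0^\infty e^{-\varepsilon s}\,\p_x(\tau_0^->s)\,ds \;=\; \e\!\left(\int_0^\infty e^{-\varepsilon s}\ind_{\{\underline{X}_s\ge -x\}}dL_s^*\right)\left[{\tt d^*}+\int_0^\infty e^{-\varepsilon s}n^*(s<\zeta)\,ds\right],
\]
and the first factor on the right tends to $h^*(x)$ as $\varepsilon\to0$ and is asymptotic to $h^*(x)$ as $x\to0^+$. Combined with your (correct) identity $\kappa^*(q,0)-q{\tt d^*}=q\int_0^\infty e^{-qs}n^*(s<\zeta)\,ds$, this gives
\[
\frac{1-\e_x[e^{-q\tau_0^-}]}{h^*(x)}\;\longrightarrow\;q\left[{\tt d^*}+\int_0^\infty e^{-qs}n^*(s<\zeta)\,ds\right]\;=\;\kappa^*(q,0),
\]
not $\kappa^*(q,0)-q{\tt d^*}$. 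The discrepancy is an atom of mass ${\tt d^*}$ at $s=0$ accumulated by the measures $\p_x(\tau_0^->s)\,h^*(x)^{-1}\,ds$ as $x\to0^+$ when $(0,\infty)$ is not regular; it is the same phenomenon as the ${\tt d^*}\delta_{\{(0,0)\}}$ term in \eqref{2598}. Because you use $(\star)$ to Laplace-invert and recover \eqref{eq:conv1}, the missing atom is precisely the thing that makes the passage from convergence of Laplace transforms to pointwise convergence of the densities at fixed $t>0$ nontrivial, and the Helly-type argument you only gesture at would have to account for it explicitly.

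The paper proves neither $(\star)$ nor its corrected form: \eqref{eq:conv1} is simply cited from \cite{bib:chm16}, and \eqref{eq:conv2} is obtained non-asymptotically. From the identity displayed above, the elementary observation $\e(\int_0^\infty e^{-\varepsilon s}\ind_{\{\underline{X}_s\ge -x\}}dL_s^*)\le h^*(x)$ (because $e^{-\varepsilon s}\le1$) immediately gives, with $\varepsilon=1/t$, the inequality $\int_0^t\p_x(\tau_0^->s)\,ds\le e\,h^*(x)[{\tt d^*}+(1+e^{-1})N^*(t)]\le(1+e)h^*(x)({\tt d^*}+N^*(t))$ for \emph{every} $x>0$; there is no limit to take. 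Your remaining arithmetic for \eqref{eq:conv2} — the bound $\ind_{[0,t]}\le e\,e^{-s/t}$, the split at $s=t$, and $tn^*(t<\zeta)\le N^*(t)$ — coincides with the paper's, and indeed once $(\star)$ is corrected your $\limsup$ becomes $e[{\tt d^*}+\int_0^\infty e^{-s/t}n^*(s<\zeta)\,ds]\le e{\tt d^*}+(1+e)N^*(t)\le(1+e)({\tt d^*}+N^*(t))$, so that part of the proposal does salvage. The genuine gap is therefore twofold: $(\star)$ as written is false, and \eqref{eq:conv1} is not actually established — you would need to either correct $(\star)$ and fill in the inversion argument taking the $s=0$ atom into account, or, as the paper does, invoke the proof in \cite{bib:chm16}.
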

\begin{proof}
The relation \eqref{eq:conv1} was proved in the proof of Proposition 3 from \cite{bib:chm16}. To deal with \eqref{eq:conv2} we recall 
formula (4.4) from \cite{bib:chm16} (see also formula (1) from \cite{bib:chd08}) rewritten in the following form
\begin{equation*}
   \int_0^\infty e^{-\varepsilon s}\p_x(\tau_0^->s)ds = \e\left(\int_0^\infty e^{-\varepsilon s}\ind_{\{\underline{X}_s\geq -x\}}
   dL^*_s\right)[{\tt d^*}+\int_0^\infty e^{-\varepsilon s} n^*(s<\zeta)ds]
\end{equation*}
with $\varepsilon>0$. Since 
\begin{equation*}
   \e\left(\int_0^\infty e^{-\varepsilon s}\ind_{\{\underline{X}_s\geq -x\}}dL^*_s\right) \leq 
   \e\left(\int_0^\infty \ind_{\{\underline{X}_s\geq -x\}}dL^*_s\right)=h^*(x)
\end{equation*}
and 
\begin{eqnarray*}
   \int_0^\infty e^{-s/t} n^*(s<\zeta)ds \leq \int_0^t n^*(s<\zeta)ds+n^*(t<\zeta)\int_t^\infty e^{-s/t}ds \leq (1+e^{-1})N^*(t)
\end{eqnarray*}
we get, by setting $\varepsilon=1/t$, that
\begin{eqnarray*}
   \int_0^t \p_x(\tau_0^->s)ds &\leq& e\int_0^\infty e^{-s/t}\p_x(\tau_0^->s)ds\\
	&\leq& e\,h^*(x)[{\tt d^*}+\int_0^\infty e^{-\varepsilon s} n^*(s<\zeta)ds]\leq(1+e)h^*(x)[{\tt d^*}+N^*(t)]\/.
\end{eqnarray*}
Dividing both sides by $h^*(x)$ and taking the limit end the proof.
\end{proof}


\begin{proof}[Proof of Proposition $\ref{prop:qt:tail}$]
It is enough to prove the bounds for $q_t^*((x,\infty))$. We use monotonicity of $h^*(x)$ and get
\begin{equation*}
   q_t^*((x,\infty)) = n^*(X_t>x,t<\zeta) = \p^{\uparrow}(X_t>x,1/h^*(X_t))\leq \dfrac{n^*(t<\zeta)}{h^*(x)}\dfrac{\p^{\uparrow}(X_t>x)}
   {n^*(t<\zeta)}\/,
\end{equation*}
where $\p^{\uparrow}$ stands for the law of the L\'evy process $(X,\p)$ conditioned to stay positive. To prove the first estimate note 
that 
\begin{equation*}
   \p^{\uparrow}(X_t>x)\leq \p^{\uparrow}(X_t>x_0) \to 0\/,\quad t\to 0^+\/,
\end{equation*}
by the right continuity of the paths. Since $t\to n^*(t<\zeta)$ is non-increasing we can make the expression 
$\p^{\uparrow}(X_t>x)/n^*(t<\zeta)$ uniformly small in $t<t_0$ and $x>x_0$ by choosing $t_0$ small enough. If ${\tt d}=0$, 
then $n^*(t<\zeta)$ tends to infinity as $t\to0^+$ and consequently for every $x>0$ we have
\begin{equation*}
   \frac{\p^{\uparrow}(X_t>x)}{n^*(t<\zeta)}\leq \dfrac{1}{n^*(t<\zeta)} \to 0\/,\quad t\to 0^+\/.
\end{equation*}
To prove the other bounds we write once again
\begin{equation*}
   q_t^*((x,\infty)) = n^*(X_t>x,t<\zeta) \leq \dfrac{n^*(t<\zeta)}{h^*(x)}\dfrac{\p^{\uparrow}(X_t>x)}{n^*(t_0<\zeta)}\/,
\end{equation*}
for given $t_0>0$. Using the right continuity of the paths and monotonicity of the function $x\to \p^{\uparrow}(X_t>x)$ we can show that 
\begin{equation*}
   \lim_{t\to t_1^+,x\to \infty}\p^{\uparrow}(X_t>x) = 0,
\end{equation*}
for every $t_1\in[0,t_0]$. Since $[0,t_0]$ is compact we deduce that 
\begin{equation*}
  \lim_{x\to \infty}\p^{\uparrow}(X_t>x) = 0,
\end{equation*}
uniformly on $(0,t_0)$. This ends the proof. 
\end{proof}


\begin{proof}[Proof of Proposition $\ref{prop:qt:general}$]
Let $x_0,t_0>0$ as in the assumptions of the proposition. We take $r=x_0/4$ and denote by $\tau_r=\inf\{t\geq 0: X_t>r\}$ the first 
passage time above level $r$. For every $x<r$, using the strong Markov property, we write
		\begin{eqnarray}
		   \qquad q_t^*(x,y) &=& \int_{u=0}^{u=t}\int_{z=r}^\infty\mathbb{Q}_x^*(\tau_r\in du, X_{\tau_r}\in dz)
		   q_{t-u}^*(z,y)\leq I_t^{(1)}(x,y)+I^{(2)}_t(x,y)\/, 
								\label{eq:qtxy:upper}
		\end{eqnarray}
		where
		\begin{eqnarray}
		   \label{eq:qtxy:I1}
		   I_t^{(1)}(x,y) &=& \int_{u=0}^{u=t} \int_{(r,y/2)} \mathbb{Q}_x^*(\tau_r\in du,X_{\tau_r}\in dz)\,p_{t-u}(y-z)\/,\\
			\label{eq:qtxy:I2}
			 I_t^{(2)}(x,y) &=& \int_{u=0}^{u=t} \int_{(y/2,\infty)} \mathbb{Q}_x^*(\tau_r\in du,X_{\tau_r}\in dz)\,q_{t-u}^*(z,y)\/.
		\end{eqnarray}
	   Using boundedness of $p_t(x)/t$ and $\nu(x)$ we can find positive constant $c>0$ such that for $y>3x_0=:x_1$ 
	   and $t<t_0$ we have
		\begin{eqnarray*}
		 I^{(1)}_t(x,y) &\leq& c\int_{u=0}^{u=t}(t-u) \int_{(r,y/2)} \mathbb{Q}_x^*(\tau_r\in du,X_{\tau_r}\in dz)\\
		&\leq& ct \mathbb{Q}_x^*(\tau_r<t,X_{\tau_r}\in (r,y/2))\leq ct \p_x(\tau_0^->t)\/,
		\end{eqnarray*}
		where we have used the fact that $y-z\geq y-y/2\geq x_0$ for $z\in(r,y/2)$. Moreover, since $w<r<2r<z$ and $u<t$, we have
\begin{eqnarray*}
    \mathbb{Q}_x^*(\tau_r\in du,X_{\tau_r-}\in dw,X_{\tau_r}\in dz) = q_u^*(x,w)\nu(z-w)\,du\,dw\,dz\/,
\end{eqnarray*}		
which follows from the Ikeda-Watanabe formula. Note that the condition $w<r<2r<z$ implies that $X_{\tau_r-}\neq X_{\tau_r}$. 
Thus we have		
\begin{eqnarray*}
   I^{(2)}_t(x,y) &=& \int_{u=0}^{u=t} \int_{w=0}^{w=r} \int_{(y/2,\infty)} \mathbb{Q}_x^*(\tau_r\in du,X_{\tau_r-}\in dw, 
   X_{\tau_r}\in dz)\,q_{t-u}^*(z,y)\\
	&=& \int_{0}^{t}du \int_{0}^{r}dw \int_{(y/2,\infty)} q_u^*(x,w)\nu(z-w)q_{t-u}^*(z,y)\,dz.
\end{eqnarray*}
For $z\in(y/2,\infty)$ and $w\in(0,r)$ we get $z-w\geq y/2-r\geq 3/2x_0-x_0/4\geq x_0$ and consequently
		\begin{eqnarray*}
			I^{(2)}_t(x,y) &\leq& c\,\int_0^t \mathbb{Q}_x^*(X_u\in(0,r))
			\mathbb{Q}_y(X_{t-u}\in(y/2,\infty))du\leq c\,\int_0^t \p_x(\tau_0^+>u)du\/.
		\end{eqnarray*}
			Using \eqref{eq:conv1} and \eqref{eq:conv2} from Lemma \ref{lem:conv} and the fact that $tn^*(t<\zeta)\leq N^*(t)$, 
			which follows from monotonicity of $t\to n^*(t<\zeta)$, we obtain
				$$
	  q_t^*(y) = \lim_{x\to 0^+}\frac{q_t^*(x,y)}{h^*(x)}\leq c ({\tt d^*}+N^*(t))\/.
	$$
	Conversely, assuming boundedness of $q_t^*(x)/({\tt d^*}+N^*(t))$ the boundedness of $p_t(x)/t$ follows directly from \eqref{eq:pt:formula:exp}, since for $x$ large enough we have
	\begin{eqnarray*}
	   p_t(x) &=& \int_0^tdu \int_0^\infty q_u^*(x+z)q_{t-u}(dz)+{\tt d^*}q_t^*(x)\\
		&\leq & c\left[\int_0^t n(t-u<\zeta)N^*(u)\,du+{\tt d}N^*(t)+{\tt d^*}N(t)\right] = ct\/,
	\end{eqnarray*}
	where the equality
   	\begin{eqnarray}
	   \label{eq:n:relation}
		   \int_0^t n(t-u<\zeta)N^*(u)\,du +{\tt d}N^*(t)+{\tt d^*}N(t)= t\/,
		\end{eqnarray}	
	follows from \eqref{eq:sup:formula} in the following way 
		\begin{eqnarray*}
	   t &=& \int_0^tdu\int_{[0,\infty)}\p(\overline{X}_u\in dx)\\
			&=& \int_0^tdu \int_0^un(s<\zeta)n^*(u-s<\zeta)\,ds+{\tt d}N^*(t) +{\tt d^*}N(t)\\
			&=& \int_0^t n(s<\zeta)\left(\int_s^t n^*(u-s<\zeta)\,du\right)\,ds+{\tt d}N^*(t) +{\tt d^*}N(t)\\
			&=& \int_0^t n(s<\zeta)N^*(t-s<\zeta)\,ds+{\tt d}N^*(t) +{\tt d^*}N(t)\/.
  \end{eqnarray*}	
	Note that we have used the fact that ${\tt dd^*}=0$. This ends the proof.  
	\end{proof}

\subsection{Proofs of asymptotic results as $t\to0^+$}
\begin{proof}[Proof of Theorem $\ref{thm:asympt:t}$]
We begin with assuming \eqref{eq:ptnu:asympt:tweak} with some $x_0>0$ and fixing $\varepsilon>0$. First, using the uniform continuity 
of $\nu(x)$ we find $\delta>0$ such that
$$
 |\nu(x)-\nu(y)|\leq \varepsilon/5
$$
whenever $|x-y|<\delta$ and $x,y>x_1$. We can also require that $\delta<\varepsilon/(5x_0)$. Then, for every $x>x_2=(x_0\vee x_1)+\delta$ 
we use \eqref{eq:ACformula} and write
\begin{eqnarray*}
   \frac{q_t^*(x)}{{\tt d^*}+N^*(t)}\lefteqn{-\nu(x) = \frac{1}{{\tt d^*}+N^*(t)}\int_0^t du \int_{0}^{\delta}q_u^*(z)\left(1-\frac{z}{x}\right)
   \left(\frac{p_{t-u}(x-z)}{t-u}-\nu(x-z)\right)dz}\\
	&&+\frac{{\tt d^*}}{{\tt d^*}+N^*(t)}\left(\dfrac{p_t(x)}{t}-\nu(x)\right)\\
	&&+\frac{1}{{\tt d^*}+N^*(t)}\int_0^t du \int_0^\delta q_u^*(z)\left(1-\frac{z}{x}\right)(\nu(x-z)-\nu(x))dz\\
	&&+\frac{1}{{\tt d^*}+N^*(t)}\int_0^t du \int_{\delta}^{x}q_u^*(z)\left(1-\frac{z}{x}\right)\frac{p_{t-u}(x-z)}{t-u}dz\\
	&&-\frac{\nu(x)}{x({\tt d^*}+N^*(t))}\int_0^t du \int_0^\delta zq_u^*(z)dz-\frac{\nu(x)}{{\tt d^*}+N^*(t)}\int_0^t du \int_{\delta}^\infty 
	q_u^*(z)dz\/.
\end{eqnarray*}
Taking $t$ small enough, by \eqref{eq:ptnu:asympt:tweak}, we can estimate the sum of the absolute value of the first integral above 
and the succeeding expression by $\varepsilon/5$ uniformly for $x>x_2$. We also simply have
\begin{eqnarray*}
\left|\frac{1}{{\tt d^*}+N^*(t)}\int_0^t du \int_0^\delta q_u^*(z)\left(1-\frac{z}{x}\right)(\nu(x-z)-\nu(x))dz\right| \leq \frac{\varepsilon}{5}\/,
\end{eqnarray*}
for every $x>x_2$. Using \eqref{eq:qt:tail} we get
$$
  \frac{\nu(x)}{{\tt d^*}+N^*(t)}\int_0^t du \int_{\delta}^\infty q_u^*(z)dz \leq \frac{\varepsilon}{5}\nu(x)
$$
for $t$ small enough and the $\delta<\varepsilon/(5x_0)$ gives
$$
  \frac{\nu(x)}{x({\tt d^*}+N^*(t))}\int_0^t du \int_0^\delta zq_u^*(z)dz \leq \frac{\nu(x)\delta}{x_0} \leq \frac{\varepsilon}{5}\nu(x)\/.
$$
Finally, the remaining part 
$$ 
I_3(t,x) = \frac{1}{{\tt d^*}+N^*(t)}\int_0^t du \int_{\delta}^{x}q_u^*(z)\left(1-\frac{z}{x}\right)\frac{p_{t-u}(x-z)}{t-u}dz
$$
can be bounded as follows. Uniform continuity of $\nu(x)$, which is the density of a finite measure, implies that $\nu(x)$ is bounded 
for large $x$. Since $p_t(x)/t$ converges uniformly to $\nu(x)$, $p_t(x)/t$ is also bounded (by $c_1>0$) for $x>x_3$ and small $t$. 
Thus we can apply Proposition \ref{prop:qt:general} to get
$$
  q_t^*(x)\leq c_2\,({\tt d^*}+N^*(t))
$$
for $x>x_4$ and $t$ small. Thus for $x>2x_5$ with $x_5 = (x_3\vee x_4)$ we have
\begin{eqnarray*}
 I_3(t,x) &\leq&\frac{1}{{\tt d^*}+N^*(t)}\int_0^t du \left[\int_{\delta}^{x_3}+\int_{x_3}^{x}\right]q_u^*(z)\left(1-\frac{z}{x}\right)
 \frac{p_{t-u}(x-z)}{t-u}dz\\
 &\leq&  \frac{c_1}{{\tt d^*}+N^*(t)}\int_0^t du \int_{\delta}^{x_3}q_u^*(z)dz+ c_2\int_0^t du\int_{x_3}^{x}\left(1-\frac{z}{x}\right)
 \frac{p_{t-u}(x-z)}{t-u}dz\\
&\leq& c_1 t + c_2 \int_0^t du \int_{0}^{x-x_3}\frac{w}{x} \frac{p_u(w)}{u}dw\/.
\end{eqnarray*}
Here we used also Proposition \ref{prop:qt:tail} to show that $\int_{\delta}^{x_3}q_u^*(z)dz\leq 1$ for $u<t$ with $t$ sufficiently small. 
Moreover, we have
\begin{eqnarray*}
 \int_0^t du\int_{0}^{x-x_3}\frac{w}{x} \frac{p_u(w)}{u}dw &\leq& \frac{1}{x}\int_0^t du\int_{0}^{1}(1\wedge(|w|+u))\frac{p_u(w)}{u}dw+
 \int_0^t du\int_{0}^{1}\frac{p_u(w)}{u}dw\\
&\leq& \left(1\vee \frac{1}{x_5}\right)e^{qt}\int_0^t du\int_0^\infty (1\wedge (|w|+u))\frac{p_u(w)}{u}e^{-qu}dw\/.
\end{eqnarray*}
To finish the estimates recall that
$$ 
  \frac{p_t(dz)}{t}e^{-qt}
$$
is the L\'evy measure of the infinity divisible random variable $(\tau,X_\tau)$, where $\tau$ is an exponentially distributed random 
variable with parameter $q$, which is independent from $X$ (see \cite{bib:b96}, Lemma 7, Ch. VI, p.162). In particular we have
\begin{equation}
   \label{eq:levymeasure01}
  \int_0^\infty \int_{\mathbb{R}}(1\wedge (t+|z|))\frac{p_t(dz)}{t}e^{-qt}dt<\infty\/.
\end{equation}
Thus we can find $t_1>0$ such that for $t<t_1$ we have
$$
  I_3(t,x)\leq \frac{\varepsilon}{5}\/,\quad x>x_5\/.
$$
Collecting all together, we obtain
$$
  \left|\frac{q_t^*(x)}{{\tt d^*}+N^*(t)}-\nu(x)\right|\leq \varepsilon(1\vee \nu(x))
$$
for every $x>x_5$ and $t<t_1$. Since $\nu(x)$ is bounded the result follows. 

Assuming that \eqref{eq:qtnu:asympt:tweak} holds with $x_0>0$ we use \eqref{eq:pt:formula:exp} and \eqref{eq:n:relation} to write
\begin{eqnarray*}
  \frac{p_t(x)}{t}-\nu(x) &=& \frac{1}{t}\int_0^t du \int_0^\infty \left(\frac{q_u^*(x+z)}{{\tt d^*}+N^*(u)}-\nu(x+z)\right)({\tt d^*}+N^*(u))q_{t-u}(dz)\\
&& + \frac{1}{t}\int_0^t du \left(\int_0^\delta+\int_\delta^\infty\right) \left(\nu(x+z)-\nu(x)\right)({\tt d^*}+N^*(u))q_{t-u}(dz)\\
&&+\frac{{\tt d}}{t}\left(\frac{q_t^*(x)}{{\tt d^*}+N^*(t)}-\nu(x)\right)({\tt d^*}+N^*(t))\/,
\end{eqnarray*}
where $\delta$ is chosen as previously. Let $x>x_2=x_0\wedge x_1$. Using \eqref{eq:qtnu:asympt:tweak} we get that the first integral 
above together with the last term are bounded by
\begin{eqnarray*}
   \frac{\varepsilon}{2t}\left(\int_0^t du \int_0^\infty N^*(u)q_{t-u}(z)dz+{\tt d}N^*(t)+{\tt d^*}N(t)\right) = \frac{\varepsilon}{2}\/,
\end{eqnarray*}
for every $x>x_2$, where we used once again \eqref{eq:n:relation} and the fact that ${\tt dd^*}=0$. Then the integral
\begin{equation*}
   \frac{1}{t}\int_0^t du \int_0^{\delta} \left|\nu(x+z)-\nu(x)\right|({\tt d^*}+N^*(u))q_{t-u}(z)dz
\end{equation*}	
is uniformly bounded on $x>x_2$ by 
\begin{equation*}
\frac{\varepsilon}{5t}\left(\int_0^t du \int_0^\infty q_{t-u}(z)N^*(u)dz+{\tt d^*}N(t)\right)\leq \frac{\varepsilon}{5}\/,
\end{equation*}
where in the last inequality we used \eqref{eq:n:relation}. Finally, the boundedness of $\nu(x)$ on $(x_2,\infty)$ together with 
Proposition \ref{prop:qt:tail} entails that the remaining integral
\begin{equation*}
   \frac{1}{t}\int_0^t du \int_{\delta}^\infty \left|\nu(x+z)-\nu(x)\right|({\tt d^*}+N^*(u))q_{t-u}(z)dz
\end{equation*}
is bounded by 	
\begin{equation*}
	\frac{\varepsilon}{5t}\left(\int_0^t N^*(u)n(t-u<\zeta)du+{\tt d^*}N(t)\right)\leq \frac{\varepsilon}{5}\/,
\end{equation*}
by making $t$ sufficiently small. Collecting all together we obtain the result.
\end{proof}


\begin{proof}[Proof of Theorem $\ref{thm:ftx:tzero}$]
By \eqref{eq:ft:formula} we can write
	\begin{eqnarray*}
	   \frac{f_t(x)}{t}-\nu(x) &=& \frac{1}{t}\int_0^t\left(\frac{q_u^*(x)}{{\tt d^*}+N^*(u)}-\nu(x)\right)({\tt d^*}+N^*(u))n(t-u<\zeta)du\\
		&&+{\tt d}\left(\frac{q_t^*(x)}{{\tt d^*}+N^*(t)}-\nu(x)\right)({\tt d^*}+N^*(t))\/.
	\end{eqnarray*}
	By Theorem \ref{thm:asympt:t}, we have
	\begin{equation*}
	  \frac{q_u^*(x)}{{\tt d^*}+N^*(u)}\to\nu(x)\/,\quad t\to 0^+\/,
	\end{equation*}
	uniformly on $x>x_0$ and the required convergence follows from \eqref{eq:n:relation}. 
\end{proof}

\subsection{Proofs of estimates}

Since Proposition \ref{prop:qt:tail} was proved in Subsection \ref{subsec:Aux}, we begin this part directly with the proof of Theorem \ref{thm:qtx:bounds}.

  \begin{proof}[Proof of Theorem $\ref{thm:qtx:bounds}$]
	   We begin with the implication (i) $\Rightarrow$ (ii) and show the upper bounds in \eqref{eq:qtx:bounds}. For fixed $x_0,t_0>0$, 
	   we set $r=x_0/4$ and we use the same arguments as in the proof of Proposition \ref{prop:qt:general} in order to write
		$$
		  q_t^*(x,y)\leq I^{(1)}_t(x,y)+I^{(2)}_t(x,y)\/,
		$$
		where $I^{(1)}_t(x,y)$ and $I^{(2)}_t(x,y)$ are given by \eqref{eq:qtxy:I1} and \eqref{eq:qtxy:I2}, respectively. To deal with 
		$I^{(1)}_t(x,y)$ we note that since $z<y/2$, we have $|z-y|>y/2>2r$. Consequently, \eqref{eq:nu:monoton} and 
		\eqref{eq:nu:double} give $\nu(y-z)\leq c(r)\nu(y/2)\leq c^2(r)\nu(y)$. Thus using the upper bounds in \eqref{eq:ptnu:bounds} 
		we get 
\begin{eqnarray*}
    p_{t-u}(y-z)\leq c_1(t-u)\nu(y-z)\leq c_2\,t\nu(y)\/,\quad t<t_0\/, 
\end{eqnarray*}
for some $c_2=c_2(t_0,x_0)>0$. Consequently, we can write
\begin{eqnarray*}
  I^{(1)}_t(x,y) &\leq& c_2\,t\,\nu(y) \mathbb{Q}_x^*(X_{\tau_r}\in(r,y/2),\tau_r<t) \leq c_2 t\,\nu(y)\p_x(\tau_0^{-}>t)\/.
\end{eqnarray*}

To estimate $I^{(2)}_t(x,y)$ we will use the fact that for $w<r<2r<z$ and $u<t$ we have
\begin{eqnarray*}
    \mathbb{Q}_x^*(\tau_r\in du,X_{\tau_r-}\in dw,X_{\tau_r}\in dz) = q_u^*(x,w)\nu(z-w)\,du\,dw\,dz\/.
\end{eqnarray*}		
which follows from the Ikeda-Watanabe formula. Note that the condition $w<r<2r<z$ implies that $X_{\tau_r-}\neq X_{\tau_r}$. 
Thus we have		
\begin{eqnarray*}
   I^{(2)}_t(x,y) &=& \int_{u=0}^{u=t} \int_{w=0}^{w=r} \int_{(y/2,\infty)} 
   \mathbb{Q}_x^*(\tau_r\in du,X_{\tau_r-}\in dw, X_{\tau_r}\in dz)\,q_{t-u}^*(z,y)\\
	&=& \int_{0}^{t}du \int_{0}^{r}dw \int_{(y/2,\infty)} q_u^*(x,w)\nu(z-w)q_{t-u}^*(z,y)\,dz\/.
\end{eqnarray*}
Once again, since $z-w>y/2-r>y/4>r$ assumptions \eqref{eq:nu:monoton} and \eqref{eq:nu:double} imply that
$$
  \nu(z-w)\leq c_3 \nu(y/4)\leq c_4 \nu(y)\/.
$$
It gives
\begin{eqnarray*}
   I^{(2)}_t(x,y) &\leq& c_4\,\nu(y) \int_0^t \p_x(\tau_0^->u)du \int_{(y/2,\infty)}q_{t-u}^*(z,y)dz\\
	 &\leq& c_4\,\nu(y)\int_0^t \p_x(\tau_0^->u)du\/,
\end{eqnarray*}
where the last inequality simply follows from estimating $q_{t-u}^*(z,y)$ by $p_{t-u}(z,y)$ and enlarging the integration to the 
whole real line. Collecting all together we obtain
\begin{equation}
  \frac{q_t^*(x,y)}{h^*(x)}\leq c_5 \frac{\nu(y)}{h^*(x)}\left(\p_x(\tau_0^->t)+\int_0^t \p_x(\tau_0^->u)du\right)
\end{equation}
and Lemma \ref{lem:conv} gives
\begin{eqnarray*}
    q_t^*(y) = \lim_{x\to 0^+}\frac{q_t^*(x,y)}{h^*(x)}\leq c_6\, \nu(y)({\tt d^*}+N^*(t))\/,\quad y>x_0\/, t<t_0\/.
\end{eqnarray*}
The lower bounds can be shown as follows. For every $x>x_0$ and $t\leq t_0$, using \eqref{eq:ACformula}, we write
	\begin{eqnarray*}
	   \dfrac{q_{t}^*(x)}{\nu(x)}\geq {\tt d^*}\dfrac{p_t(x)}{t\nu(x)}+\int_{0}^{t} du \int_0^{x/2} q_u^*(z)\left(1-\frac{z}{x}\right)
	   \dfrac{p_{t-u}(x-z)}{(t-u)\nu(x-z)}\dfrac{\nu(x-z)}{\nu(x)}\,dz\/.
	\end{eqnarray*}
	We can estimate the last two quotients in the above-written integral using the lower bounds in \eqref{eq:ptnu:bounds} and \eqref{eq:nu:monoton}
	\begin{eqnarray*}
	  \dfrac{q_{t}^*(x)}{\nu(x)} &\geq& c_9\left({\tt d^*}+ \int_0^t du \int_0^{x/2} q_{u}^*(z)dz\right)\\
		&=& c_9 \left({\tt d^*}+\left(N^*(t)-\int_0^t \int_{x/2}^\infty q_u^*(z)dz\,du\right)\right)\/.
	\end{eqnarray*}
	Using the general upper bounds \eqref{eq:qt:tail} with $\varepsilon=h^*(x_0/2)/2$ we obtain
	\begin{eqnarray*}
	   \int_0^t \int_x^\infty q_u^*(z)dz du\leq \frac{h^*(x_0/2)}{2h^*(x/2)} N^*(t) \leq \frac{1}{2} N^*(t) \/,
		\end{eqnarray*}
		for $x>x_0$ and $t<t_1$, where the last inequality follows from monotonicity of $h^{*}(x)$. The same argument but 
		using \eqref{eq:qt:tail2} instead of \eqref{eq:qt:tail} leads to the desired lower bounds for $x>x_1$ and $t<t_0$, where 
		$x_1$ is some constant greater than $x_0$. To finish the proof of the implication (i) $\Rightarrow$ (ii) observe that  the 
		function
	$$
	  g(t,x) = \dfrac{\int_0^t du\int_0^x q_u^*(z)dz}{{\tt d^*}+N^*(t)}
	$$
is a strictly positive continuous function of $t>0$ and $x>0$. Thus it is bounded from below by a strictly positive constant on 
the compact set $[t_1,t_0]\times[x_0,x_1]$.
		
		\medskip
		
	Now assume that (ii) holds. Since for every $z>0$ and $x>x_0$ we have
	\begin{eqnarray*}
	   q_s^*(x+z)\leq c\nu(x+z)\left({\tt d^*}+N^*(t)\right)\leq c_2\nu(x)\left({\tt d^*}+N^*(t)\right)\/,
	\end{eqnarray*}
	where the last inequality follows from \eqref{eq:nu:monoton}, we get by \eqref{eq:pt:formula:exp} and \eqref{eq:n:relation} 
	that
	\begin{eqnarray*}
	   p_t(x) \leq c_2 \nu(x) \left(\int_0^t n(t-s<\zeta)N^*(t)ds +{\tt d}N^*(t)+{\tt d^*}N(t)\right) = c_2t\nu(x).
	\end{eqnarray*}
	for $x>x_0$ and $t<t_1$ with some $t_1\leq t_0$. Finally, to deal with the lower bounds we use once again 
	\eqref{eq:pt:formula:exp} and write for $x>x_0$ that
	\begin{eqnarray*}
	  p_t(x) &\geq & \int_0^t \int_0^x q_u^*(x+z)q_{t-u}(dz)du+{\tt d}q_t^*(x)\\
		&\geq& c_{10}\left[ \int_0^t \left({\tt d^*}+N^*(u)\right)\,\int_0^\infty \nu(x+z) q_{t-u}(dz)du+ {\tt d}\nu(x)({\tt d^*}+N^*(t))\right]\\
		&\geq & c_{10}\nu(x)\left[ \int_0^t N^*(u)\,\int_0^x q_{t-u}(dz)du+ {\tt d}N^*(t)+{\tt d^*}N(t)\right]\/, 
	\end{eqnarray*}
	where the second line is a consequence of (ii) and in the last estimate we used \eqref{eq:nu:double}. Using the general 
	upper bounds \eqref{eq:qt:tail} for $q_{t-u}((x,\infty))$ allows us to write
	\begin{eqnarray*}
	  \int_0^t \int_0^u n^*(\zeta<s)ds\,\int_0^x q_{t-u}(dz) \geq  \frac{1}{2}\int_0^t n(t-u<\zeta)N^*(u)du
	\end{eqnarray*}
	for $t<t_1$ and $x>x_0$, which by \eqref{eq:n:relation} gives
	$$
	   p_t(x)\geq ct\nu(x)
	$$
	for $t<t_1$ and $x>x_0$. The same holds true for $t<t_0$ and $x>x_1$ by using \eqref{eq:qt:tail2} instead of 
	\eqref{eq:qt:tail}. The estimates on the remaining compact set $[t_1,t_0]\times [x_0,x_1]$ follows from the continuity 
	argument as above. This ends the proof. 
 
	\end{proof}

		
	\begin{proof}[Proof of Theorem $\ref{thm:supremum:bounds}$]
	  The result is just an easy consequence of \eqref{eq:ft:formula}, the consecutive parts of the proof of Theorem \ref{thm:qtx:bounds}  
	  (see Remark \ref{rem:thmconditions} for detailed description) and the relation \eqref{eq:n:relation}.
\end{proof}

\subsection{Proofs of asymptotic results as $x\to \infty$}

\begin{proof}[Proof of Theorem $\ref{thm:asympt:x}$]

We begin with assuming \eqref{eq:ptnu:asympt:xstrong} and we will show that \eqref{eq:qtnu:asympt:xweak} holds. We fix $\varepsilon>0$ 
and $x_0>0$. Using \eqref{eq:nu:monoton} and \eqref{eq:nu:double} we find positive constants $c_1, c_2>0$ depending on $x_0$ such 
that 
\begin{equation}
   \label{eq:asympt:double}
	  \nu(z)\leq c_1 \nu(x/2)\leq c_2\nu(x)
\end{equation} 
for every $x>x_0$ and $z\in(x/2,x)$. Note also that \eqref{eq:ptnu:asympt:xstrong} ensures that \eqref{eq:ptnu:bounds} is fulfilled and consequently we can apply the result of Theorem \ref{thm:qtx:bounds} to get existence of a constant $c_3=c_3(t_0,x_0)>0$ such that
\begin{equation}
   \label{eq:asympt:qtu:upper}
	q_t^*(x)\leq c_3N^*(t)\nu(x)
\end{equation}
for every $t<t_0$ and $x>x_0$.
Writing
\begin{equation*}
  \int_0^{t_0}du \int_{0}^{(1-\delta)x}\frac{w}{x}\frac{p_{u}(w)}{u}dw \leq \frac{1}{x}\int_0^{t_0}du\int_0^1 (1\wedge w) 
  \frac{p_u(w)}{u}dw+(1-\delta)\int_0^{t_0}du\int_{1}^\infty \frac{p_u(w)}{u}dw
\end{equation*}
and using \eqref{eq:levymeasure01} we choose $\delta\in(1/2,1)$ and $x_1>x_0$ such that 
\begin{equation}
\label{eq:ptint:bounds}
\int_0^{t_0}du \int_{0}^{(1-\delta)x}\frac{w}{x}\frac{p_{u}(w)}{u}dw\leq \frac{\varepsilon}{5c_2c_3}\/,\quad x>x_1\/.
\end{equation}
Now, again by \eqref{eq:nu:monoton} and \eqref{eq:nu:double}, we find $c_4,c_5>0$ depending on $x_1$ such that
\begin{equation}
   \label{eq:asympt:double2}
	  \nu(x-z)\leq c_4\, \nu(x(1-\delta))\leq c_5\,\nu(x)\/,
\end{equation}
for every $z\in(0,\delta x)$ and $x>x_1$. 

The next step is to exploit \eqref{eq:ACformula} in order to write
\begin{eqnarray}
	\nonumber
   \frac{q_t^*(x)}{\nu(x)} - ({\tt d^*}+N^*(t)) &=& \int_0^t du\int_{0}^{\delta x} q_u^*(z)\left(1-\frac{z}{x}\right)
   \left(\frac{p_{t-u}(x-z)}{(t-u)\nu(x-z)}-1\right)\frac{\nu(x-z)}{\nu(x)}dz\\
	\label{eq:asympt:int12}
	&&+\int_0^t du\int_{0}^{\delta x} q_u^*(z)\left(1-\frac{z}{x}\right)\left(\frac{\nu(x-z)}{\nu(x)}-1\right)dz\\
	\label{eq:asympt:int2}
	&&+\int_0^tdu\int_{\delta x}^x \frac{q_u^*(z)}{\nu(x)}\left(1-\frac{z}{x}\right)\frac{p_{t-u}(x-z)}{(t-u)}dz\\
	&&+\int_0^t du \int_0^{\delta x}\frac{z}{x}q_u^*(z)dz-\int_0^tdu\int_{\delta x}^\infty q_u^*(z)dz+{\tt d^*}
	\left(\dfrac{p_t(x)}{t\nu(x)}-1\right)
\end{eqnarray}
and separately estimate the above-given integrals. By \eqref{eq:ptnu:asympt:xstrong} we can find $x_2>x_1$ such that 
$$
\left|\frac{p_{t-u}(x-z)}{(t-u)\nu(x-z)}-1\right|\leq \frac{\varepsilon}{5c_5(x_1)}
$$
for every $x>x_2$ and $u<t<t_0$. Consequently, for every $t<t_0$, we obtain
\begin{eqnarray*}
  \left|\int_0^t du\int_{0}^{\delta x} q_u^*(z)\left(1-\frac{z}{x}\right)\left(\frac{p_{t-u}(x-z)}{(t-u)\nu(x-z)}-1\right)
  \frac{\nu(x-z)}{\nu(x)}dz\right| &\leq& \frac{1}{5}\varepsilon \int_0^t du\int_0^{\delta x}q_u^*(z)dz\\
	&\leq& \frac{\varepsilon}{5}N^*(t)\/.
\end{eqnarray*}
Using \eqref{eq:asympt:qtu:upper}, \eqref{eq:asympt:double} and then \eqref{eq:ptint:bounds}, we estimate the positive 
expression in \eqref{eq:asympt:int2} by
\begin{eqnarray*}
	c_3\,N^*(t) \int_0^{t}du\int_{\delta x}^x \frac{\nu(z)}{\nu(x)}\left(1-\frac{z}{x}\right)\frac{p_{t-u}(x-z)}{t-u}dz
	&\leq& c_3c_2\,N^*(t) \int_0^{t_0} du\int_{0}^{(1-\delta)x} \frac{w}{x}\frac{p_{u}(w)}{u}dw\\
	&\leq& \frac{\varepsilon}{5}N^*(t)\/,
\end{eqnarray*} 
for every $t<t_0$ and $x>x_2$. Note that we used the fact that $\delta>1/2$ in the first estimate and we also applied 
the substitution $w=x-z$ in the inner integral. Integrability of the L\'evy measure gives existence of $\tilde{x}>0$ such that 
\begin{equation*}
  \int_{\tilde{x}}^\infty \nu(z)dz\leq \frac{\varepsilon}{10(t_0\vee 1)(1+c_5)c_3}\/.
\end{equation*}
Thus we can write for $x>x_3=x_2\vee (2\tilde{x})\vee (5\tilde{x}/\varepsilon)$ that
\begin{equation*}
\int_0^tdu\int_{\delta x}^\infty q_u^*(z)dz \leq c_3(t_0,x_0)t N^*(t)\int_{\delta x}^\infty \nu(z)dz\leq \frac{\varepsilon}{10}N^*(t)
\end{equation*}
and
\begin{eqnarray*}
\int_0^tdu \int_0^{\delta x}\frac{z}{x}q_u^*(z)dz &\leq& \int_0^tdu \left(\int_0^{\varepsilon x/5}+\int_{\varepsilon x/5}^{\delta x}\right)
\frac{z}{x}q_u^*(z)dz\\
&\leq & \frac{\varepsilon}{5} N^*(t)+\int_0^{t}du\int_{\varepsilon x/5}^\infty q_u^*(z)dz
\leq \left(\frac{\varepsilon}{5}+\frac{\varepsilon}{10}\right)N^*(t)\/.
\end{eqnarray*}
Collecting all together leads to
\begin{equation*}
   \left|\frac{q_t^*(x)}{\nu(x)}-N^*(t)\right|\leq \frac{4\varepsilon}{5}N^*(t)+I(t,x)+{\tt d^*}\left(\dfrac{p_t(x)}{t\nu(x)}-1\right)\/,
\end{equation*}
where
$$
I(t,x) = \int_0^t du\int_{0}^{\delta x} \left|q_u^*(z)\left(1-\frac{z}{x}\right)\left(\frac{\nu(x-z)}{\nu(x)}-1\right)\right|dz.
$$
The integrand in the integral above can be bounded in the following way
$$
 \left| \ind_{(0,\delta x)}(z)q_u^*(z)\left(1-\frac{z}{x}\right)\left(\frac{\nu(x-z)}{\nu(x)}-1\right)\right| \leq c_4\,q_u^*(z)\/,
$$
where the last function is integrable over $(0,t)\times(0,\infty)$. Thus, by dominated convergence and \eqref{eq:nu:ratio:limit}, 
we get that \eqref{eq:asympt:int12} tends to zero as $x\to\infty$, uniformly in $t<t_0$. It means that \eqref{eq:qtnu:asympt:xweak} 
follows. However, if we assume that $\nu(x)$ is non-increasing on $(x^*,\infty)$ we can deal with  integral in \eqref{eq:asympt:int12} 
more carefully in the following way. In this case we assume that ${\tt d^*}=0$, since for ${\tt d^*}>0$ the result is obvious as it was 
explained in Remark \ref{rem:dstarpositive:xinfty}. For $x>x_4 = (2x_3)\vee (2x^*)$ and $z<x_4/4$ we have $x-z\geq x_4/2>x^*$. 
Moreover $x_4/4>x_3/2>\tilde{x}$. These together with the monotonicity of $\nu(x)$ give 
\begin{eqnarray*}
   I(t,x) &=& \int_0^t du\left(\int_{0}^{x_4/4}+\int_{x_4/4}^{\delta x}\right) q_u^*(z)\left(1-\frac{z}{x}\right)\left(\frac{\nu(x-z)}
   {\nu(x)}-1\right)dz\\
	&\leq&\left(\frac{\nu(x-x_4/4)}{\nu(x)}-1\right)\int_0^t du \int_0^{x_4/4}q_u^*(z)dz+c_3c_5\,tN^*(t)\int_{x_4/4}^\infty \nu(z)dz\\
	&\leq& \left(\frac{\nu(x-x_4/4)}{\nu(x)}-1+\frac{\varepsilon}{10}\right)N^*(t)\/.
\end{eqnarray*}
Finally, using \eqref{eq:nu:ratio:limit} we can find $x_5>x_4$ such that 
$$
\frac{\nu(x-x_4/4)}{\nu(x)}-1\leq \frac{\varepsilon}{10}\/,\quad x>x_5
$$
and obtain
$$
  \left|\frac{q_t^*(x)}{\nu(x)}-N^*(t)\right|\leq \varepsilon\,N^*(t)\/,\quad x>x_5\/,\quad t<t_0
$$
which is just \eqref{eq:qtnu:asympt:xstrong}.

Now we proceed in the opposite direction and assume that \eqref{eq:qtnu:asympt:xstrong} holds. We fix $\varepsilon>0$ and we find 
$x_0>0$ such that 
\begin{equation}
\label{eq:qt:asympt:bounds2}
   \left|\frac{q_u^*(x+z)}{\nu(x+z)({\tt d^*}+N^*(u))}-1\right|\leq \frac{\varepsilon}{3(c_1\vee 1)}
\end{equation}
for every $t<t_0$ and $x>x_0$, where $c_1=c_1(x_0)>0$ is chosen by using \eqref{eq:nu:monoton} to ensure that
\begin{equation}
   \label{eq:nu:asympt:monotonic}
\nu(y)\leq c_1\nu(x)\/,\quad y\geq x\geq x_0\/.
\end{equation}
Then we use  \eqref{eq:pt:formula:exp} and \eqref{eq:n:relation} to write
\begin{eqnarray*}
   \frac{p_t(x)}{\nu(x)}-t &=& \int_0^t du \int_0^\infty \left(\frac{q_u^*(x+z)}{\nu(x+z)({\tt d^*}+N^*(u))}-1\right)({\tt d^*}+N^*(u))
   \frac{\nu(x+z)}{\nu(x)}q_{t-u}(dz)\\
	&&+\int_0^t du \int_0^\infty \left(\frac{\nu(x+z)}{\nu(x)}-1\right)({\tt d^*}+N^*(u))q_{t-u}(dz)\\
	&&+{\tt d}\left(\frac{q_t^*(x)}{\nu(x)}-N^*(t)-{\tt d^*}\right)\/. 
\end{eqnarray*}
The last term can be easily bounded directly by using \eqref{eq:qt:asympt:bounds2}
$$
{\tt d}\left(\frac{q_t^*(x)}{\nu(x)}-({\tt d^*}+N^*(t))\right)\leq \frac{\varepsilon}{3}\,{\tt d}({\tt d^*}+N^*(t))=\frac{\varepsilon}{3}\,{\tt d}N^*(t)\/,
$$ 
for $t<t_0$ and $x>x_0$. Moreover, by \eqref{eq:nu:asympt:monotonic} and \eqref{eq:qt:asympt:bounds2} we estimate 
\begin{equation*}
  \left|\int_0^t du\int_0^\infty \left(\frac{q_u^*(x+z)}{\nu(x+z)({\tt d^*}+N^*(u))}-1\right)({\tt d^*}+N^*(u))\frac{\nu(x+z)}{\nu(x)}q_{t-u}(dz)\right|
\end{equation*}
by 
\begin{equation*}	
	\frac{\varepsilon}{3}\int_0^t du\int_0^\infty({\tt d^*} +N^*(u))q_{t-u}(dz) = \frac{\varepsilon}{3}\int_0^t ({\tt d^*}+N^*(u))n(t-u<\zeta)du\/.
\end{equation*}
Note also that 
$$
 \left|\frac{\nu(x+z)}{\nu(x)}-1\right|N^*(t_0-u)\leq (c_1+1)N^*(t_0-u)
$$
Thus, by the dominated convergence, \eqref{eq:nu:ratio:limit} and monotonicity of $t\to N^*(t)$, we get
\begin{eqnarray*}
 I_2(t,x) &=& \int_0^t du\int_0^\infty \left|\frac{\nu(x+z)}{\nu(x)}-1\right|  ({\tt d^*}+N^*(u))q_{t-u}(dz)\\
 &=& \int_0^t du\int_0^\infty \left|\frac{\nu(x+z)}{\nu(x)}-1\right|  ({\tt d^*}+N^*(t-u))q_{u}(dz)\\
&\leq& \int_0^{t_0} du\int_0^\infty \left|\frac{\nu(x+z)}{\nu(x)}-1\right| ({\tt d^*}+N^*(t_0-u))q_{u}(dz) \to 0
\end{eqnarray*}
as $x\to \infty$ and the convergence is uniform in $t<t_0$.

Once again the monotonicity of $\nu(x)$ allows to estimate the integral $I_2(t,x)$ in more delicate way. First, we use \eqref{eq:qt:tail2} 
for a dual process to find $x_1>x_0 \vee 1$ such that for every $x>x_1$
$$
\int_{x}^\infty q_{t-u}(dz)\leq \frac{\varepsilon h(1)\,n(t-u<\zeta)}{3h(x)(c_1+1)}\leq \frac{\varepsilon}{3(c_1+1)}\,n(t-u<\zeta)\/.
$$
Then, by monotonicity of $\nu(x)$ on $(x^*,\infty)$ we get for $x>x_1\vee x^*$
\begin{eqnarray*}
  I_2(t,x) &=& \int_0^tdu\left(\int_0^{x_1}+\int_{x_1}^\infty\right) \left|\frac{\nu(x+z)}{\nu(x)}-1\right|({\tt d^*}+N^*(u))q_{t-u}(dz) \\
	&\leq& \int_0^{t}du({\tt d^*}+N^*(u))\left[\left(1-\frac{\nu(x+x_1)}{\nu(x)}\right)\int_0^{x_1}q_{t-u}(dz)+(c_1+1)\int_{x_1}^{\infty}
	q_{t-u}(dz)\right]\\
	&\leq& \left(1-\frac{\nu(x+x_1)}{\nu(x)}+\frac{\varepsilon}{3}\right)\int_0^t({\tt d^*} +N^*(u))n(t-u<\zeta)du\/.
\end{eqnarray*}
By \eqref{eq:nu:ratio:limit} we can find $x_2>x_1$ such that for every $x>x_2$ and $t<t_0$
$$
  I_2(t,x) \leq \frac{2\varepsilon}{3}  \int_0^t ({\tt d^*}+N^*(u))n(t-u<\zeta)du\/.
$$
Collecting all together and applying \eqref{eq:n:relation} we get
$$
\left|\frac{p_t(x)}{\nu(x)}-t\right|\leq \varepsilon t
$$
for $x>x_2$ and $t>t_0$, i.e. \eqref{eq:ptnu:asympt:xstrong} holds. This ends the proof. 

\end{proof}

\begin{proof}[Proof of Theorem \ref{thm:ftx:xinfty}]
 By \eqref{eq:ptnu:asympt:strong2} and Theorem \ref{thm:asympt:x} we get that \eqref{eq:qtnu:asympt:xweak} holds, i.e. for given $\varepsilon>0$ we can find $x_0>0$ such that
$$
   \left|\frac{q_t^*(x)}{\nu(x)}-({\tt d^*}+N^*(t))\right|\leq \varepsilon\/,\quad x>x_0\/,\quad t<t_0\/.
$$
Thus, using \eqref{eq:ft:formula} and \eqref{eq:n:relation}, we get
\begin{equation*}
   \frac{f_t(x)}{\nu(x)}-t = \int_0^t n(t-u<\zeta)\left(\frac{q_u^*(x)}{\nu(x)}-({\tt d^*}+N^*(u))\right)du+{\tt d}\left(\frac{q_t^*(x)}
   {\nu(x)}-({\tt d^*}+N^*(t))\right)\/,
\end{equation*}
which gives that for every $t<t_0$ and $x>x_0$ we obtain
\begin{equation*}
 \left|\frac{f_t(x)}{\nu(x)}-t\right|\leq \varepsilon\left(N(t_0)+{\tt d}\right)
\end{equation*}
and the result follows. If additionally $\nu(x)$ is non-increasing for large $x$, we use Theorem \ref{thm:asympt:x} to show \eqref{eq:qtnu:asympt:xstrong}. Since $f_t(x)/\nu(x)-t$ can be written as a sum of 
\begin{equation*}
\int_0^t n(t-u<\zeta)\left(\frac{q_u^*(x)}{({\tt d^*}+N^*(u))\nu(x)}-1\right)({\tt d^*}+N^*(u))du
\end{equation*}
and
\begin{equation*}
{\tt d} ({\tt d^*}+N^*(t))\left(\frac{q_t^*(x)}{({\tt d^*}+N^*(t))\nu(x)}-1\right)\/.
\end{equation*}
 Consequently, we get
\begin{eqnarray*}
 \left|\frac{f_t(x)}{\nu(x)}-t\right|  &\leq& \varepsilon \left(\int_0^t n(t-u<\zeta)N^*(u)du+{\tt d}N^*(t)\right)=\varepsilon t\/,
\end{eqnarray*} 
for given $\varepsilon>0$, every $t<t_0$ and $x>x_0$, where $x_0$ is large enough. This ends the proof.
\end{proof}

\subsection*{Acknowledgments.} We are grateful to Tomasz Grzywny for valuable discussions and pointing out the examples considered in Section 4.2.

\end{document}